\documentclass[11pt, a4paper]{amsart}
\usepackage{verbatim}
\usepackage[dvips]{color}
\usepackage{amssymb}
\usepackage[active]{srcltx}
\usepackage{mathrsfs}
\usepackage{latexsym}
\usepackage{amssymb}
\usepackage{graphicx}
\usepackage{geometry}
\usepackage{amsmath}
\usepackage{float}
\usepackage{mathrsfs}
\usepackage[active]{srcltx}
\usepackage{xcolor}

\usepackage{epstopdf}
\usepackage{caption}

\allowdisplaybreaks

\newtheorem{theorem}{Theorem}[section]
\newtheorem{proposition}{Proposition}[section]
\newtheorem{lemma}{Lemma}[section]

\newtheorem{assumption}{Assumption}[section]
\newtheorem{definition}{Definition}[section]
\newtheorem{remark}{Remark}[section]

\theoremstyle{definition}

\floatstyle{plain} \restylefloat{figure} \restylefloat{table}

\def\A{\mathcal A}

\def\C{\mathcal C}

\def\E{\mathbb E}
\def\F{\mathcal F}

\def\I{\mathcal I}
\def\J{\mathcal J}

\def\P{\mathbb P}

\def\R{\mathbb R}
\def\S{\mathcal S}
\def\T{\mathcal T}

\def\ud{\mathrm{d}}
\def\OC{\overline{\mathcal{C}}}

\geometry{hmargin=3.0cm, vmargin=2.5cm}

\title{Playing with ghosts in a Dynkin game}
\author[Tiziano De Angelis and Erik Ekstr\"om]{Tiziano De Angelis and Erik Ekstr\"om}
\subjclass[2010]{91A15, 60G40, 60J60}
\keywords{Dynkin games; uncertain competition; randomised strategies; Nash equilibria; reflecting strategies.}
\address{T.~De Angelis: School of Mathematics, University of Leeds, LS2 9JT Leeds, United Kingdom.}\email{t.deangelis@leeds.ac.uk}
\address{E.~Ekstr\"om: Department of Mathematics, Uppsala University, Box 480, 75106 Uppsala, Sweden.}
\email{ekstrom@math.uu.se}
\date{\today}
\thanks{{\em Acknowledgments}: T.~De Angelis gratefully acknowledges support by the EPSRC grant EP/R021201/1 and  E.~Ekstr\"om by the Knut and Alice Wallenberg Foundation.
Parts of this work were carried out during T.~De Angelis' visits to Uppsala University and E.~Ekstr\"om's visits to University of Leeds. We thank both institutions for their hospitality.}

\begin{document}

\begin{abstract}
We study a class of optimal stopping games (Dynkin games) of preemption type, with uncertainty about the existence of competitors.
The set-up is well-suited to model, for example, real options in the context of investors who do not want to publicly reveal their interest in a certain business opportunity. 
We show that there exists a Nash equilibrium in randomized stopping times which is described explicitly in terms of the corresponding one-player game. 
\end{abstract}

\maketitle

\section{Introduction}

\subsection*{Background} Stopping games have received huge attention in the stochastic control literature since their inception, dating back to work by Dynkin in \cite{D}. In the standard modern formulation of the two-player game (due to Neveau \cite{Ne}) the players have gains/losses depending on a stochastic process $X$, which they both observe. Their aim is to maximise gains (or minimise losses) by finding stopping rules that allow for a Nash equilibrium in the game. Both players know the structure of the game and have full information on the specifications of the process $X$. 

Many real-world applications call for incomplete and/or asymmetric information about the game structure and/or the underlying stochastic process. In particular, in this paper we are interested in determining equilibria for two-player Dynkin games in which each player is uncertain about the existence of a competitor. Before describing our contribution in further detail we spend a few words on the existing literature in order to contextualise the problem.

It is difficult to provide a detailed literature review that would do justice to the numerous contributions on Dynkin games in the standard framework (i.e., full and symmetric information) and it falls outside the scope of our introduction. For the case of zero-sum games one may for example refer to \cite{LM} and \cite{EP} for general treatments of the martingale and the Markovian set-ups, respectively, and to \cite{K} for a reduction of a financial game option into a stopping game. Other seminal contributions to this literature can be found in \cite{B}, \cite{S} and \cite{Ya} among others. For the case of nonzero-sum games one may refer to \cite{BF} for connections between such games and variational inequalities, to \cite{HZ} for martingale methods for existence of a saddle-point in a general set-up, and to \cite{At} and \cite{DFM} for sufficient conditions for the existence (and uniqueness) of Nash equilibria in hitting times to thresholds for one dimensional diffusions.

In our paper we will be dealing with a class of problems that share similarities with nonzero-sum Dynkin games. However we depart from the standard set-up by including the key feature of {\em uncertainty about competition}. In this respect we draw from the literature on games with incomplete/asymmetric information, whose main common denominator is the need of the players to hide their information from the competitors. Mathematically this translates into the use of randomised stopping times; the latter can be informally understood as stopping rules which prescribe to stop according to some `intensity'; for example, in a discrete-time setting, it means that stopping may occur at each time with some probability. The other key feature of this type of games is the need to account for the dynamic evolution of the players' beliefs concerning those parameters that they cannot fully observe (in other words, the players update their private views on the `state of the world' by observing what happens during the game, and one needs to keep track of such updates). We incorporate beliefs in our game by adding a state variable, denoted by $\Pi$, whose dynamic is constructed starting from the randomised stopping strategies of the two players (see Section \ref{sec:construction} below).

The literature on Dynkin games with asymmetric/incomplete information has started to gain traction in recent years. The first contribution that we are aware of is by Gr\"un \cite{G}. In \cite{G}, a zero-sum stopping game with asymmetric information about the payoff functions is considered. In a setting where one of the players has the informational advantage of knowing the payoff functions and the other player only knows the distribution of possible payoff functions, a value of the game is obtained (with randomised stopping times) and characterised as a viscosity solution of a nonlinear variational problem. Later on, in \cite{GG}, authors use methods inspired by dynamic programming to study a zero-sum stopping game in which the players have access to different filtrations generated by the dynamics of two different processes. Explicitly solvable examples in this literature are still rare and the first one is given in \cite{DEG} (we provide another one in Section \ref{sec:realoption} for our game). Authors in \cite{DEG} study a zero-sum stopping game with asymmetric information about a drift parameter of the underlying diffusion process. An explicit Nash equilibrium is obtained in which the uninformed player uses a normal stopping time (pure strategy) and the informed one uses a randomised stopping time (mixed strategy). 

For completeness, but without elaborating further, we also mention that there exists a vast literature on stochastic differential games with asymmetric information and the interested reader may look into \cite{C} and \cite{CR} and references therein.

In a strategic context of agents playing hide and seek, it seems natural to ask what happens if players cannot be certain about the existence of competition. Several real-world situations fall under this category as, for example, (a) investors who do not want to reveal their interest for specific business opportunities (so-called real options), (b) potential house buyers who are not aware of how many other offers will be put forward (and how quickly), (c) buyers of depletable assets/goods (e.g., cheap flight tickets), etc. The feature of uncertain competition has indeed been addressed in a static setting of auction theory, see, e.g.~\cite{HKL} and \cite{MM}, and more recently \cite{EL}. However, to the best of our knowledge there are no studies featuring uncertain competition in a dynamic setting and in particular there seem to be no contributions to the theory of Dynkin games. With this paper we aim at filling that gap and encourage further research in that direction.

\subsection*{Our contribution}
Here, we study a combined stopping/preemption game between two players who are interested in the same asset. The player who stops first receives the full payoff, defined in terms of an underlying Markov process $X$ representing the asset. The game is a {\em nonzero-sum} Dynkin game but, in contrast with the classical set-up, in our model the players face {\em uncertain competition}, i.e. each player is uncertain as to whether the other player exists or not. 

At the start of the game each player estimates the probability of competition. That is, Player 1 believes she has competition with probability $p_1$ and Player 2 believes she has competition with probability $p_2$. As the game evolves, both players adjust their beliefs according to the dynamic of their own belief process $\Pi^i$, with $i=1,2$. Such adjustment is based on a combination of two key elements: (i) the observation of the underlying asset $X$ and (ii) the lack of action from the other player. Intuitively, if the payoff associated with the current asset value becomes large, this is appealing for both players; therefore, from the point of view of Player 1, the fact that Player 2 has not stopped yet, suggests that Player 2 may not exist at all. (This simple heuristics also motivates the title of our paper.) 

Within this context the use of randomised stopping times stems from two observations. On the one hand, it allows the players to hide their participation in the game in order to `fool' their opponent. On the other hand, it is intuitively clear that, due to the preemption feature of the game, it would be impossible to reach a non-trivial equilibrium using pure stopping times (with respect to the filtration generated by the asset). Indeed, if Player 1 picks a stopping time $\tau$, then Player 2 would possibly stop just before $\tau$ so as to receive the full payoff.

In this paper, with no loss of generality we consider $p_1\le p_2$. Then we prove that there exists a Nash equilibrium in terms of strategies whose character completely depends on the initial belief of Player 1. Here we only describe the main ideas around the structure of the equilibrium but we emphasise that, at a deeper level, we find several remarkable properties of the players' optimal strategies which will be described in fuller detail in Section \ref{sec:comment} (as they need a more extensive mathematical discussion). 

It turns out that the state space of the two-dimensional process $(\Pi^1,X)$ divides into three disjoint regions: the no-action region ($\C$), 
the action region ($\C'$) and the stopping region ($\S$). If $(\Pi^1,X)$ starts in the no-action region (note that $\Pi^1_0=p_1$), then the equilibrium consists of no action (from either player) until the boundary $\partial\C$ of the region is reached (notice that no action results in $\Pi^i$ being constant for $i=1,2$); then Player 2 employs a randomised strategy in such a way that the process $(\Pi^1,X)$ reflects along the boundary $\partial\C$ towards the interior of $\C$ (as an effect of decreasing $\Pi^1$); at the same time Player 1 will follow a (equilibrium) strategy consisting of stopping with a fraction $p_1/p_2$ of the `stopping intensity' of Player~2.
If, instead, the initial beliefs are such that $(\Pi^1,X)$ starts in $\C'$, then Player 2 employs a strategy consisting of 
stopping immediately with a certain probability. That makes the process $(\Pi^1,X)$ jump strictly into the interior of the no-action region $\C$. After this initial jump the reflection strategy described above is employed. Also in this case Player 1 will follow the (equilibrium) strategy of stopping with a fraction $p_1/p_2$ of the `stopping intensity' of Player~2. Finally, if $(\Pi^1,X)$ starts in the stopping region $\S$, then the equilibrium strategy consists of immediate stopping for both players. In this case the players split the payoff evenly.
 
Remarkably, the boundaries of the no-action region and of the action-region can be specified explicitly in terms of the corresponding one-player game with no competition. This allows to construct explicitly randomised stopping times, at equilibrium, in all those games whose one-player counterpart is explicitly solvable. (For instance the literature on optimal stopping of one-dimensional diffusions is rich with such examples, see e.g.~\cite{PS}.) In particular, in Section \ref{sec:realoption} we provide the explicit solution of a real option game with uncertain competition as an illustration of our results. 

As explained above, our work is the first one to address uncertain competition in a dynamic setting. We find a surprisingly explicit yet rich structure of the equilibrium strategies, which allows for a collection of interesting considerations (see details in Section~\ref{sec:comment}). In short, we observe that the most active player is the one who has the largest initial belief about the existence of the opponent, and that she is the one that benefits the most from randomisation; we call this feature the `benefit of wariness'. 
Also, the strategy adopted by Player 2 when the game starts with $(\Pi^1_0,X_0)\in \C'$ is somewhat surprising: in the stopping literature there is no analogue for the region $\C'$ and in the singular control literature (which is also related to the present work) one should not expect jumps strictly in the interior of the no-action region $\C$ (at least in absence of fixed costs of control). Finally, it is possible to draw a parallel between the strategies that we construct and a concept of entry time to `randomised' sets. This is also rather surprising because, from the beginning, we look for Nash equilibria without any restrictions on the randomised stopping times.

The rest of the paper is organised as follows. The game is set in Section \ref{sec:setup}, where we also define randomised stopping times and recall the concept of Nash equilibrium in our context. In Section \ref{sec:obs} we derive a number of properties of the two players' expected payoffs, which are needed for the subsequent analysis. Section \ref{sec:construction} is devoted to the construction of the belief processes, the specification of the sets $\OC$ (which corresponds to $\C$ plus a portion of its boundary), $\C'$ and $\S$, and the construction of a suitably reflected belief process. The existence and the main additional facts around the Nash equilibrium are derived in Section \ref{sec:NE}. We conclude the paper with a fully solved example in Section \ref{sec:realoption}.

\section{Set-up}\label{sec:setup}

Let $(\Omega,\P,\F)$ be a probability space hosting the following: 
\begin{itemize}
\item[(a)] a continuous, $\R^d$-valued, strong Markov process $X$ which is regular (it can reach any open set in finite time
with positive probability, for any value of the initial point $X_0=x$); 
\item[(b)] two Bernoulli distributed random variables 
$\theta_i$, $i=1,2$; 
\item[(c)] two Uniform$(0,1)$-distributed random variables $U_i$, $i=1,2$.
\end{itemize}
Furthermore, we assume that these 
processes and random variables are mutually independent, and that $\P(\theta_i=1)=1-\P(\theta_i=0)=p_i\in(0,1]$.

We denote by $\F^X=\{\F^X_t\}_{0\leq t<\infty}$ the right-continuous augmentation of the filtration generated by $X$, and
we let $\T$ be the set of $\F^X$-stopping times. In what follows we will use the notations $\P_x(\,\cdot\,):=\P(\,\cdot\,|X_0=x)$ and $\E_x[\,\cdot\,]:=\E[\,\cdot\,|X_0=x]$ 
for $x\in\R^d$ to indicate the dependence on the initial state of the process $X$. 

Below we consider an optimal stopping game between two players (Player~1 and Player~2), 
each of which does not know whether the other player is {\em active} or {\em passive}. 
By an `active player' we mean a player who actively participates in the stopping game by choosing a (randomised) stopping time; a `passive player' is not participating in the game, and will never stop. 
Mathematically, Player~$i$ uses the random variable $\theta_i$, $i=1,2$ to model whether the other player, Player~$(3-i)$, is active or passive. Specifically, Player~$1$ has active competition if $\theta_1=1$ and no competition if $\theta_1=0$; Player~$2$ has active 
competition if $\theta_2=1$ and no competition if $\theta_2=0$.

To describe the strategies in this type of game, we first introduce the notion of a randomised stopping time (see \cite{LS}, \cite{TV} and \cite{DEG}).

\begin{definition}{\bf (Randomised stopping times.)}\label{def:rst}
Let $\A$ be the set of right-continuous non-decreasing $\F^X$-adapted processes 
$\Gamma=\{\Gamma_t;0-\leq t<\infty\}$ satisfying 
$\Gamma_{0-}=0$ and $\Gamma_t\leq 1$ for all $t\geq 0$. Let 
$U$ be a random variable, which is independent of $X$, with $U\sim\text{Uniform}(0,1)$.
A $U$-{\em randomised stopping time} $\gamma$ is a random variable of the form 
\begin{align}\label{gamma}
\gamma=\inf\{t\geq 0:\Gamma_t>U\}
\end{align}
for some $\Gamma\in \A$. Furthermore, we say that $\gamma$ in \eqref{gamma} is {\em generated} by $\Gamma$.
\end{definition}

The collection of $U_i$-randomised stopping times is denoted $\T^R_i$ for $i=1,2$ and 
$U_i$ introduced in (c) above. Moreover, for future reference we also introduce stopping times
\begin{align}\label{eq:tau-u}
\gamma_u:=\inf\{t\geq 0:\Gamma_t>u\}, \quad u\in[0,1],
\end{align}
with $\Gamma\in\A$. Finally, we note that $\Delta\Gamma_t:=\Gamma_t-\Gamma_{t-}$ for $t\ge 0$ and $\Gamma\in\A$.

Let $g:\R^d\to[0,\infty)$ be a continuous function such that  
\begin{align}\label{eq:gpos}
\sup_{x\in\R^d}g(x)>0.
\end{align}
Our game is now specified as follows. Each player chooses a randomised stopping strategy: Player 1 chooses $\tau\in\T^R_1$ and Player 2 chooses $\gamma\in\T^R_2$. The payoff for Player~$1$ at time $\tau$ is 
\begin{eqnarray}\label{R1}
R_1(\tau,\gamma) &:=& \left(g(X_\tau)1_{\{\tau<\hat\gamma\}}  
+\frac{1}{2} g(X_\tau) 1_{\{\tau= \hat\gamma\}}
\right)1_{\{\tau<\infty\}},
\end{eqnarray}
where 
\begin{equation}
\label{hatgamma}
\hat\gamma:=\gamma 1_{\{\theta_1=1\}}+\infty 1_{\{\theta_1=0\}}.
\end{equation}
Similarly, at time $\gamma$, Player~2 receives the amount
\begin{eqnarray}\label{R2}
R_2(\tau,\gamma) &:=& \left(g(X_\gamma)1_{\{\gamma<\hat\tau\}}  
+ \frac{1}{2} g(X_\gamma) 1_{\{\gamma= \hat\tau\}}\right)
1_{\{\gamma<\infty\}},
\end{eqnarray}
where 
\[\hat\tau:=\tau 1_{\{\theta_2=1\}}+\infty 1_{\{\theta_2=0\}}.\]
Here the half in the second term indicates that, in the case when both players stop simultaneously, the payoff is split evenly between them. Alternative specifications of the payoff in case of simultaneous stopping are clearly possible and the methods that we develop in this paper may be extended to cover various situations. However, in the interest of a simple notation, we refrain from addressing those extensions here.

The aim of Player~1 (2) is to choose the randomised stopping time $\tau$ ($\gamma$) to maximize her/his expected discounted payoff, defined by
\[\J_1(\tau,\gamma;p_1,x):=\E_x[e^{-r\tau}R_1(\tau,\gamma)]\]
\[(\J_2(\tau,\gamma;p_2,x):=\E_x[e^{-r\tau}R_2(\tau,\gamma)]),\]
where the discount rate $r\geq 0$ is a given constant and $x\in\R^d$. 
By independence of $\theta_i$ from $\F^X$ we can rewrite Player 1's expected payoffs as 
\begin{align}\label{eq:Ji0}
\J_1(\tau,\gamma;p_1,x) \!=& (1-p_1)\E_x[e^{-r\tau}g(X_\tau)1_{\{\tau<\infty\}}]\\
&\!+\!p_1\E_x[e^{-r\tau}g(X_\tau) 1_{\{\tau< \gamma\}}]+ \frac{p_1}{2}\E_x[e^{-r\tau}g(X_\tau) 1_{\{\tau= \gamma<\infty\}}].\notag
\end{align}
A similar expression holds for Player 2's expected payoff $\J_2$, upon replacing $p_1$ with $p_2$ and swapping $\tau$ and $\gamma$ in the obvious way.

\begin{definition}\label{def:NE}
{\bf (Nash equilibrium.)}
Given $x\in\R^d$ and $p_i\in(0,1]$, $i=1,2$, a pair $(\tau^*,\gamma^*)\in\T^R_1\times\T^R_2$ is a Nash equilibrium if 
\[\J_1(\tau,\gamma^*;p_1,x)\leq \J_1(\tau^*,\gamma^*;p_1,x)\]
and 
\[\J_2(\tau^*,\gamma;p_2,x)\leq \J_2(\tau^*,\gamma^*;p_2,x)\]
for all pairs $(\tau,\gamma)\in\T^R_1\times\T^R_2$.
Given an equilibrium pair $(\tau^*,\gamma^*)\in\T^R_1\times\T^R_2$ we define the equilibrium payoffs as
\begin{align}\label{eq:eqvalue}
v_i(p_i,x):=\J_i(\tau^*,\gamma^*;p_i,x),\qquad\text{for $i=1,2$}.
\end{align}
\end{definition}

\begin{remark}
In the formulation above, each player is an active player, but is uncertain whether the other player is active or passive. An alternative formulation would be to stipulate that Player~$\mathrm i$ receives her/his payoff only if the
Bernoulli random variable $\theta_{3-i}$ takes the value 1 (i.e.~if Player $\mathrm i$ is indeed active). 
More precisely, the payoff to Player 1 becomes
\begin{eqnarray*}
\hat R_1(\tau,\gamma) &:=& \Big(g(X_\tau)1_{\{\tau<\hat\gamma\}}  
+\frac{1}{2} g(X_\tau) 1_{\{\tau= \hat\gamma\}}\Big)1_{\{\hat\tau<\infty\}}\\
&=& 1_{\{\theta_2=1\}}R_1(\tau,\gamma)
\end{eqnarray*}
(and a similar expression for Player 2). Then, by independence, the corresponding expected discounted payoffs
\begin{equation*}
\hat{\J}_i(\tau,\gamma;p_i,x):=\E_x[e^{-r\tau}\hat R_i(\tau,\gamma)],\qquad\text{$i=1,2$,}
\end{equation*}
satisfy $\hat{\J}_i(\tau,\gamma;p_i,x) =p_{3-i}\J_i(\tau,\gamma;p_i,x)$. In particular, a pair of strategies $(\tau,\gamma)\in\mathcal T^R_1\times \mathcal T^R_2$
is a Nash equilibrium for the game with expected payoffs 
$\hat{\mathcal J}_i$ if and only if it is a Nash equilibrium for the game with expected payoffs $\mathcal J_i$, so the two formulations are equivalent from a game-theoretic perspective.
\end{remark}

\begin{remark}
Our set-up and the results in this paper could be extended to consider a continuous strong Markov process on a domain $\I$, provided that the boundary behaviour of $X$ at $\partial\I$ is carefully accounted for. Such extension is straightforward when $X$ is a regular 1-dimensional diffusion on an open interval $\I\subseteq\R$ with natural boundaries. 
\end{remark}


\section{Some useful observations on the game's payoffs}\label{sec:obs}

In this section we make a few general observations that provide some intuition for the structure of the equilibrium payoffs of the two players.

\subsection{Bounds in terms of the single-player game}

Denote by
\begin{equation}
\label{V}
V(x):=\sup_{\tau\in\mathcal T} \E_x[e^{-r\tau}g(X_\tau)1_{\{\tau<\infty\}}]
\end{equation}
the value function of the corresponding single-player stopping game. (For a single-player game there is no need for randomisation, and 
we remark that, accordingly, the supremum in \eqref{V} is taken over stopping times.) For future reference we denote
\begin{equation}
 \label{tau}
\tau^*_V:=\inf\{t\geq 0:V(X_t)=g(X_t)\}.
\end{equation}

From now on, we make the following standing assumption.
\begin{assumption}\label{ass:cont}
The integrability condition 
\begin{equation}
\label{integrability}
\E_x\left[\sup_{t\geq 0}e^{-rt}g(X_t)\right]<\infty,\qquad x\in\R^d,
\end{equation}
holds, and the function $V:\R^d\to[0,\infty)$ is continuous. 
Furthermore, 
\begin{equation}
\label{limsup}
\limsup_{t\to\infty}e^{-rt}V(X_t)1_{\{\tau^*_V=+\infty\}}=0.
\end{equation}
\end{assumption}

\begin{remark}\label{rem:OS}
Continuity of $V$ is known to hold in virtually all examples addressed in the literature on optimal stopping.
The integrability condition \eqref{integrability} and condition
\eqref{limsup} guarantee that the stopping time $\tau^*_V$
is an optimal stopping time in \eqref{V}, see for example \cite{PS}. Furthermore, the process
\[Y_t:=e^{-rt}V(X_t),\qquad t\in[0,+\infty]\]
with $Y_\infty:=0$ is a supermartingale, and 
\[M_t:=e^{-r(t\wedge \tau^*_V)}V(X_{t\wedge \tau^*_V}),\qquad t\in[0,+\infty)\]
is a uniformly integrable martingale. 

While \eqref{limsup} is not necessary for the methods that we develop in the rest of the paper, it is a convenient assumption that helps us keep the exposition simple and avoid lengthy localisation arguments in the proof of Theorem~\ref{thm:NE}. 
\end{remark}

It is clear from \eqref{eq:Ji0} that $\J_1(\tau,\gamma;p_1,x)\le \E_x[e^{-r\tau}g(X_\tau)1_{\{\tau<\infty\}}]$ for any $(\tau,\gamma)\in\mathcal T^R_1\times\T^R_2$. Moreover, for $\tau\in\T_1^R$ we have
\begin{eqnarray}
\label{rand/st}
\E_x[e^{-r\tau}g(X_\tau)1_{\{\tau<\infty\}}] &=& \int_0^1 \E_x[e^{-r\tau}g(X_\tau)1_{\{\tau<\infty\}}\vert U_1=u]\,du\\
&=& \notag
\int_0^1 \E_x[e^{-r\tau_u}g(X_{\tau_u})1_{\{\tau_u<\infty\}}]\,du\\
&\leq& \notag V(x),
\end{eqnarray}
where $\tau_u\in\T$ is defined as in \eqref{eq:tau-u}.
Thus, allowing for randomisation in the single-player game does not increase the value and we have the following upper bound 
\begin{align}\label{eq:upb}
\sup_{\tau\in\T^R_1}\J_1(\tau,\gamma;p_1,x)\le \sup_{\tau\in\T^R_1}\E_x[e^{-r\tau}g(X_\tau)1_{\{\tau<\infty\}}]= V(x)
\end{align} 
for any $\gamma\in\T^R_2$. The same bound obviously holds for Player 2's expected payoff.

Next we obtain useful lower bounds. First, for any given $\gamma\in\T^R_2$ Player 1 could choose $\tau=\tau^*_V$ defined in \eqref{tau}. Therefore, using \eqref{eq:Ji0} we get
\begin{align}\label{eq:lob}
\sup_{\tau\in\T^R_1}\inf_{\gamma\in\T^R_2}\J_1(\tau,\gamma;p_1,x) &\ge 
(1-p_1)\E_x\left[e^{-r\tau^*_V}g(X_{\tau^*_V})1_{\{\tau^*_V<\infty\}}\right]\\
\notag
&=(1-p_1)V(x).
\end{align} 
Second, using that $\tau=0$, $\P$-a.s., is an admissible stopping rule gives
\begin{eqnarray}\label{eq:b1}
&&\sup_{\tau\in\T^R_1}\inf_{\gamma\in\T^R_2}\J_1(\tau,\gamma;p_1,x)\\
\notag
&\ge& \inf_{\gamma\in\T^R_2}\left\{(1-p_1)g(x)+p_1 g(x)\P(\gamma>0)+\frac{p_1}{2}g(x)\P(\gamma=0)\right\}\\
&=& \left(1-\frac{p_1}{2}\right)g(x)\notag
\end{eqnarray}
for any $x\in\R^d$. Summarising \eqref{eq:upb}-\eqref{eq:b1}, we have
\begin{equation}
\label{eq:uplob1}
 \max\left\{(1-p_1)V(x), (1-p_1/2)g(x)\right\}\leq 
 \sup_{\tau\in\T^R_1}\inf_{\gamma\in\T^R_2}\J_1(\tau,\gamma;p_{1},x)\leq V(x),
\end{equation}
and similar arguments also lead to 
\begin{align}\label{eq:uplob}
\max\left\{(1-p_2)V(x), (1-p_2/2)g(x)\right\}\le\sup_{\gamma\in\T^R_2}
\inf_{\tau\in\T^R_1}\J_2(\tau,\gamma;p_{2},x)\le V(x).
\end{align} 

It follows from \eqref{eq:uplob1}-\eqref{eq:uplob} that 
\begin{equation}
 \label{U}
 V_0(p_i,x):=\max\{(1-p_i)V(x),\, (1-p_i/2)g(x)\}
\end{equation}
is a `safety level' for Player~$i$.
More precisely, the equilibrium values (see \eqref{eq:eqvalue}) of any Nash equilibrium $(\tau^*,\gamma^*)\in\T^R_1\times\T^R_2$ have to satisfy $v_{i}(p_i,x)\geq V_0(p_i,x)$.

\subsection{A more explicit form of the expected payoffs}
We next present a convenient way to rewrite the problem, which will inform our construction of an equilibrium
in Section~\ref{sec:construction}. As anticipated, we expect that equilibria be found in randomised strategies associated to increasing processes. Forgetting for a moment equilibrium considerations, however, if Player $i$ plays a randomised stopping time, then by an argument as in \eqref{rand/st} above, Player $3-i$'s optimal response can be found in the class of normal stopping times. In particular, letting $\tau\in\T^R_1$ and $\gamma\in\T^R_2$ be arbitrary one has
\begin{align}
\label{eq:ran1}&\sup_{\zeta\in\T^R_1}\J_1(\zeta,\gamma;p_1,x)=\sup_{\zeta\in\T}\J_1(\zeta,\gamma;p_1,x),\\
\label{eq:ran2}&\sup_{\zeta\in\T^R_2}\J_2(\tau,\zeta;p_2,x)=\sup_{\zeta\in\T}\J_2(\tau,\zeta;p_2,x).
\end{align}

Although this argument does not in general lead to an equilibrium, it suggests that, given a randomised strategy of Player $i$'s, we can restrict our attention to the expected payoff of Player $3-i$ evaluated at normal stopping times. In the next proposition we give explicit formulae for such payoffs.

\begin{proposition}
\label{altform}
For $i=1,2$ let $\Gamma^{i}\in\A$, and let $\tau\in\T^R_1$ and $\gamma\in\T^R_2$ be generated by $\Gamma^1$ and $\Gamma^2$, respectively. 

For any $\zeta\in\T$ and $x\in\R^d$ we have
\begin{eqnarray}\label{eq:J1}
\J_1(\zeta, \gamma;p_1,x) &=&  (1-p_1)\E_x\left[e^{-r\zeta}g(X_\zeta)1_{\{\zeta<+\infty\}}\right] \\
&& +p_1\E_x\left[e^{-r\zeta}g(X_\zeta)(1-\Gamma^{2}_\zeta)1_{\{\zeta<+\infty\}}\right]\notag\\
&&+\frac{p_1}{2} \E_x\left[e^{-r\zeta}g(X_\zeta)\Delta\Gamma^2_\zeta 1_{\{\zeta<+\infty\}}\right]\notag
\end{eqnarray}
and
\begin{eqnarray}\label{eq:J2}
\J_2(\tau,\zeta;p_2,x) &=&  (1-p_2)\E_x\left[e^{-r\zeta}g(X_\zeta)1_{\{\zeta<+\infty\}}\right] \\
&& +p_2\E_x\left[e^{-r\zeta}g(X_\zeta)(1-\Gamma^{1}_\zeta)1_{\{\zeta<+\infty\}}\right]\notag\\
&&+\frac{p_2}{2} \E_x\left[e^{-r\zeta}g(X_\zeta)\Delta\Gamma^1_\zeta 1_{\{\zeta<+\infty\}}\right].\notag
\end{eqnarray}
\end{proposition}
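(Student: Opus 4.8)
The plan is to start from the decomposition \eqref{eq:Ji0} of Player~1's payoff, specialised to a normal stopping time $\zeta\in\T$ in place of $\tau$, namely
\begin{align*}
\J_1(\zeta,\gamma;p_1,x)=(1-p_1)\E_x[e^{-r\zeta}g(X_\zeta)1_{\{\zeta<\infty\}}]+p_1\E_x[e^{-r\zeta}g(X_\zeta) 1_{\{\zeta< \gamma\}}]+\tfrac{p_1}{2}\E_x[e^{-r\zeta}g(X_\zeta) 1_{\{\zeta= \gamma<\infty\}}],
\end{align*}
and then to evaluate the two terms involving $\gamma$ by integrating out the randomisation variable $U_2$. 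The first term already has the required form, so only the events $\{\zeta<\gamma\}$ and $\{\zeta=\gamma\}$ remain to be analysed.

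The crux is to rewrite these two events in terms of $\Gamma^2$ and $U_2$ alone. Since $\gamma=\inf\{t\ge 0:\Gamma^2_t>U_2\}$ by \eqref{gamma} and $\Gamma^2$ is non-decreasing and right-continuous, I would use $\sup_{s\le\zeta}\Gamma^2_s=\Gamma^2_\zeta$ and $\sup_{s<\zeta}\Gamma^2_s=\Gamma^2_{\zeta-}$ to obtain, almost surely, the pathwise identities
\begin{align*}
\{\zeta<\gamma\}=\{\Gamma^2_\zeta\le U_2\}\qquad\text{and}\qquad\{\zeta=\gamma\}=\{\Gamma^2_{\zeta-}\le U_2<\Gamma^2_\zeta\}
\end{align*}
(together with $\{\gamma<\zeta\}=\{U_2<\Gamma^2_{\zeta-}\}$, which partitions the space and serves as a consistency check). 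The first identity holds because $\gamma>\zeta$ is equivalent to $\Gamma^2_s\le U_2$ for every $s\le\zeta$; the second is obtained by additionally requiring $\Gamma^2_\zeta>U_2$, so that the first passage of $\Gamma^2$ above the level $U_2$ occurs exactly at $\zeta$.

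With these identities in hand I would condition on $\F^X$. Because $\zeta$, $X$ and the process $\Gamma^2$ are $\F^X$-measurable while $U_2\sim\text{Uniform}(0,1)$ is independent of $\F^X$, the factor $e^{-r\zeta}g(X_\zeta)1_{\{\zeta<\infty\}}$ comes out of the conditional expectation and one is left to compute $\P(\Gamma^2_\zeta\le U_2\mid\F^X)=1-\Gamma^2_\zeta$ and $\P(\Gamma^2_{\zeta-}\le U_2<\Gamma^2_\zeta\mid\F^X)=\Gamma^2_\zeta-\Gamma^2_{\zeta-}=\Delta\Gamma^2_\zeta$, where the absence of atoms of $U_2$ makes the open/closed nature of the inequalities immaterial. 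Substituting back into the second and third terms yields precisely \eqref{eq:J1}, and formula \eqref{eq:J2} then follows verbatim after interchanging the roles of the two players, i.e.\ replacing $(p_1,\Gamma^2,\gamma)$ by $(p_2,\Gamma^1,\tau)$.

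The only genuine subtlety, and the step I would treat most carefully, is the pathwise description of $\{\zeta=\gamma\}$: one must track the left limit $\Gamma^2_{\zeta-}$ correctly at a jump of $\Gamma^2$ and verify that the first-passage definition of $\gamma$ selects the half-open interval $[\Gamma^2_{\zeta-},\Gamma^2_\zeta)$ for $U_2$, so that its conditional probability equals the jump size $\Delta\Gamma^2_\zeta$ rather than a one-sided limit. The boundary event $\{U_2=\Gamma^2_\zeta\}$, on which these identities may fail, is $\P$-null and hence negligible; everything else reduces to a routine application of independence and the tower property.
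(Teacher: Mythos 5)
Your proposal is correct and follows essentially the same route as the paper: starting from \eqref{eq:Ji0} with $\zeta\in\T$, conditioning on the $X$-information, and using independence of $U_2$ to reduce the conditional probabilities of $\{\zeta<\gamma\}$ and $\{\zeta=\gamma\}$ to $1-\Gamma^2_\zeta$ and $\Delta\Gamma^2_\zeta$ respectively (your null-set caveat for $\{U_2=\Gamma^2_\zeta\}$ is exactly the content of the paper's inclusion \eqref{eq:incl0}). The only cosmetic difference is that you identify $\{\zeta=\gamma\}$ pathwise with $\{U_2\in[\Gamma^2_{\zeta-},\Gamma^2_\zeta)\}$, whereas the paper obtains $\Delta\Gamma^2_\zeta$ via the limit $\lim_{\delta\to 0}\P_x(\gamma>\zeta-\delta\,|\,\F^X_\zeta)-(1-\Gamma^2_\zeta)$.
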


\begin{proof}
By symmetry, it is sufficient to consider the case $i=2$. Thus we let $\gamma\in\T^R_2$ be generated by $\Gamma^2\in\A$, and we want to show that \eqref{eq:J1} holds. 

Recall from \eqref{eq:Ji0} that
\begin{align}\label{eq:Ji0'}
\J_1(\zeta,\gamma;p_1,x) =& (1-p_1)\E_x[e^{-r\zeta}g(X_\zeta)1_{\{\zeta<\infty\}}]+p_1\E_x[e^{-r\zeta}g(X_\zeta) 1_{\{\zeta< \gamma\}}]\\
&+ \frac{p_1}{2}\E_x[e^{-r\zeta}g(X_\zeta) 1_{\{\zeta= \gamma<\infty\}}],\notag
\end{align}
and the first term on the right-hand side of \eqref{eq:Ji0'} is identical to the first term of \eqref{eq:J1}. 
For the second term, by Definition \ref{def:rst} we have
\begin{eqnarray*}
\E_x[e^{-r\zeta}g(X_\zeta)1_{\{\zeta<\gamma\}}]&=&\E_x\Big[e^{-r\zeta}g(X_\zeta)\P_x\big(\zeta<\gamma\big|\F^X_\zeta\big)1_{\{\zeta<\infty\}}\Big]\\
&=&\E_x\Big[e^{-r\zeta}g(X_\zeta)(1-\Gamma^2_\zeta)1_{\{\zeta<\infty\}}\Big],
\end{eqnarray*}
where for the final expression we used that for any $t\ge 0$ it holds
\begin{align}\label{eq:incl0}
\{\Gamma^2_t<U_2\}\subset\{\gamma>t\}\subseteq \{\Gamma^2_t\le U_2\}.
\end{align}

The final term in \eqref{eq:Ji0'} gives
\begin{align}\label{eq:Delta}
\E_x[e^{-r\zeta}g(X_\zeta)1_{\{\zeta=\gamma<+\infty\}}]=\E_x\Big[e^{-r\zeta}g(X_\zeta)\P_x\big(\zeta=\gamma\big|\F^X_\zeta\big)1_{\{\zeta<+\infty\}}\Big].
\end{align}
Now we notice that
\begin{eqnarray*}
\P_x\big(\zeta=\gamma\big|\F^X_\zeta\big)&=&\P_x\big(\gamma \ge \zeta \big|\F^X_\zeta\big)-\P_x\big(\gamma > \zeta \big|\F^X_\zeta\big)\\
&=&\lim_{\delta\to 0}\P_x\big(\gamma> \zeta -\delta \big|\F^X_\zeta\big)-(1- \Gamma^2_\zeta)\\
&=&\lim_{\delta\to 0}(1-\Gamma^2_{\zeta-\delta})-(1- \Gamma^2_\zeta)=\Delta\Gamma^2_\zeta
\end{eqnarray*}
which, combined with \eqref{eq:Delta}, concludes our proof.
\end{proof}

\section{Construction of a candidate strategy}
\label{sec:construction}

\subsection{Adjusted beliefs}

In order to find an equilibrium for the game we study the evolution (during the game) of the players' {\em beliefs} regarding the existence of their opponent. In other words, if $\gamma\in\T^R_2$ is generated by $\Gamma^2\in\A$, then Player 1 
dynamically evaluates the conditional probability of Player 2 being active as
\begin{eqnarray}\label{Pi1}
\Pi^1_t&:=&\P(\theta_1=1|\F^X_t,\hat\gamma>t)=
\frac{\P(\theta_1=1|\F^X_t)\P(\hat\gamma>t\vert \F^X_t, \theta_1=1)}{\P(\hat\gamma>t|\F^X_t)}\\
&=&\frac{p_1\P(\gamma>t|\F^X_t)}{1-p_1+p_1\P(\gamma>t|\F^X_t)}=\frac{p_1(1-\Gamma^2_t)}{1-p_1\Gamma^2_t}\notag
\end{eqnarray}
provided $p_1\in(0,1)$, where we recall that $\hat\gamma=\gamma 1_{\{\theta_1=1\}}+\infty 1_{\{\theta_1=0\}}$ as in \eqref{hatgamma}.
Here we used independence of $\gamma$ and $\theta_1$ in the third equality, 
and in the final equality we used that $\P(\gamma>t|\F^X_t)=1-\Gamma^2_t$ because of \eqref{eq:incl0}.

Likewise, if $\tau\in\T^R_1$ is generated by $\Gamma^1\in\A$, then Player 2 evaluates the conditional probability of Player 1 being active as
\begin{align}\label{Pi2}
\Pi^2_t:=&\P(\theta_2=1|\F^X_t,\hat\tau>t)=\frac{p_2(1-\Gamma^1_t)}{1-p_2\Gamma^1_t}
\end{align}
provided $p_2\in(0,1)$.

Note that there is a one-to-one correspondence between $\Gamma^{3-i}$ and $\Pi^i$, for $i=1,2$. In fact, 
\begin{equation}
 \label{GammaPi}
 \Gamma^{3-i}_t=\frac{p_i-\Pi^{i}_t}{p_i(1-\Pi^{i}_t)},\qquad i=1,2.
\end{equation}
Furthermore, the equality 
\begin{equation}
\label{alg}
(1-p_i)=(1-p_i\Gamma^{3-i}_t)(1-\Pi^i_t),\qquad i=1,2,
\end{equation}
holds. We will see below how these formulae become a key ingredient in the analysis of our game. 

\subsection{A reflected adjusted belief process}

We now give a construction of a right-continuous process $(Z,X)$ that is bound to evolve in a particular subset of $[0,1)\times\R^d$ (denoted $\overline\C$ below). The process $Z$ will play the role of an adjusted belief process $\Pi$ corresponding to a suitably constructed increasing process $\Gamma$ (obtained from $Z$ by the relation \eqref{GammaPi}), see Lemma~\ref{lem:gamma} below.

We first introduce the disjoint sets 
\begin{align}
\label{eq:Ci}&\OC:=\{(p,x)\in(0,1)\times\R^d:(1-p)V(x)\geq g(x)\}\\
\label{eq:Si}&\C':=\{(p,x)\in(0,1)\times\R^d:(1-p/2)g(x)<(1-p)V(x)< g(x)\}\\
\label{eq:Spi}&\S:=\{(p,x)\in(0,1)\times\R^d:(1-p)V(x)\leq (1-p/2)g(x)\}
\end{align}
and note that $\OC\cup\C'\cup\S=(0,1)\times \R^d$. 
Since $p\mapsto(1-p)V(x)$ is non-increasing, we have that 
if $(p,x)\in\OC$, then $(0,p)\times\{x\}\subseteq\OC$. Define
\[b(x):=\inf\{p\in[0,1]:(1-p)V(x)\leq g(x)\}\in[0,1].\]
Likewise, $p\mapsto(1-p)V(x)-(1-p/2)g(x)$ is non-increasing, so
if $(p,x)\in\C'$ then  $(b(x),p)\times\{x\}\subseteq\C'$, and we define
\[c(x):=\inf\{p\in[0,1]:(1-p)V(x)\leq(1-p/2) g(x)\}\in[0,1].\]

Then $b(x)\le c(x)$ for $x\in\R^d$ and we have
\begin{align}
\label{eq:C2}&\OC=\{(p,x)\in(0,1)\times\R^d:p\leq b(x)\},\\
\label{eq:C3}&\C'=\{(p,x)\in(0,1)\times\R^d:b(x)< p<c(x)\}
\end{align}
and 
\[\S=\{(p,x)\in(0,1)\times\R^d:c(x)\leq p\}.\]


\begin{lemma}
\label{cont}
The functions $b,c:\R^d\to[0,1]$ are continuous. Moreover, 
\begin{itemize}
\item[(i)]
$b(x)=c(x)=0 \iff V(x)=g(x);$
\item[(ii)]
$b(x)=c(x)=1 \iff g(x)=0.$
\end{itemize}
\end{lemma}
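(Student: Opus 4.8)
The plan is to analyze the two defining conditions via the functions $p\mapsto (1-p)V(x)-g(x)$ and $p\mapsto (1-p)V(x)-(1-p/2)g(x)$, both of which are affine (hence continuous and monotone) in $p$ for each fixed $x$, and to leverage the joint continuity coming from Assumption~\ref{ass:cont} (continuity of $V$) together with continuity of $g$.

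\emph{Continuity of $b$ and $c$.} First I would observe that for fixed $x$, the map $\phi_x(p):=(1-p)V(x)-g(x)$ is affine with slope $-V(x)\le 0$, and $b(x)=\inf\{p\in[0,1]:\phi_x(p)\le 0\}$. Since $V,g\ge 0$, one checks the boundary values $\phi_x(0)=V(x)-g(x)\ge 0$ (because $V(x)\ge g(x)$ always, as $\tau=0$ is admissible in \eqref{V}) and $\phi_x(1)=-g(x)\le 0$, so a crossing always exists and $b(x)\in[0,1]$ is well-defined. When $V(x)>0$ the unique zero is $b(x)=1-g(x)/V(x)$, an explicit continuous function of $x$ on the open set $\{V>0\}$; the continuity of $V$ and $g$ and the bound $g\le V$ make this immediate. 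The analogous computation for $\psi_x(p):=(1-p)V(x)-(1-p/2)g(x)$, which is affine with slope $-\bigl(V(x)-\tfrac12 g(x)\bigr)$, gives $c(x)=\bigl(V(x)-g(x)\bigr)/\bigl(V(x)-\tfrac12 g(x)\bigr)$ on $\{V>0\}$, again continuous there. The remaining work is to establish continuity at boundary points where $V(x)=0$, and I would handle this by a sandwich/limiting argument using the explicit formulas and the sign constraints $0\le g\le V$.

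\emph{The equivalences (i) and (ii).} For (i), if $V(x)=g(x)$ then $\phi_x(0)=0$ and $\psi_x(0)=0$, and since both $\phi_x,\psi_x$ are strictly decreasing precisely when $V(x)>0$, one reads off $b(x)=c(x)=0$; conversely $b(x)=0$ forces $\phi_x(0)\le 0$, i.e. $V(x)\le g(x)$, which combined with $V(x)\ge g(x)$ gives equality. For (ii), if $g(x)=0$ then $\phi_x(p)=(1-p)V(x)\ge 0$ for all $p<1$, so the defining inequality is only (weakly) achieved as $p\to 1$, yielding $b(x)=1$, and similarly for $c$; conversely $b(x)=1$ means $(1-p)V(x)>g(x)$ for all $p<1$, and letting $p\uparrow 1$ forces $g(x)\le 0$, hence $g(x)=0$.

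\emph{Main obstacle.} The routine part is the open region $\{V>0\}$ where everything is given by explicit continuous formulas. The genuine difficulty, and the step I would allocate the most care to, is continuity at points $x_0$ with $V(x_0)=0$: there, necessarily $g(x_0)=0$ as well (since $0\le g\le V$), so by (ii) we expect $b(x_0)=c(x_0)=1$, and I must show $b(x_n),c(x_n)\to 1$ for every sequence $x_n\to x_0$. The subtlety is that $b(x_n)=1-g(x_n)/V(x_n)$ is a ratio of two quantities tending to $0$, so the limit is not automatic; I would control it by showing $\liminf_n b(x_n)\ge 1$ via the bound $g(x_n)/V(x_n)\to 0$, which in turn I would extract from continuity of $g$ and $V$ together with a lower bound on $V$ near $x_0$ coming from regularity of $X$ and \eqref{eq:gpos} (so that $V>0$ in a neighbourhood, or else handle the case $V\equiv 0$ locally separately). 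This boundary analysis is where the interplay between the affine structure in $p$ and the analytic properties of $V$ really has to be pinned down.
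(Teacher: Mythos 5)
Your core computation is exactly the paper's: both exploit the affine structure in $p$ to get the explicit formulas $b(x)=1-g(x)/V(x)$ and $c(x)=\bigl(V(x)-g(x)\bigr)/\bigl(V(x)-g(x)/2\bigr)$, from which continuity and the equivalences follow. The difference is in what you identify as the difficulty. The paper's proof opens with the observation that $V>0$ on \emph{all} of $\R^d$, because $X$ is regular (it reaches any open set in finite time with positive probability) and $\sup_x g(x)>0$ by \eqref{eq:gpos}; combined with $V\ge g$ this also gives $V>g/2$ everywhere, which is what makes the denominator in the formula for $c$ nonzero and makes $p\mapsto(1-p)V(x)-(1-p/2)g(x)$ strictly decreasing. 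Once $\{V=0\}=\emptyset$ is in hand, your ``main obstacle'' --- continuity at points with $V(x_0)=0$ --- simply does not arise, and the whole proof is three lines.

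The gap in your plan is that the boundary case you propose to ``allocate the most care to'' cannot be handled the way you describe; it must be shown to be vacuous. At a hypothetical point $x_0$ with $V(x_0)=0$ one has $(1-p)V(x_0)=0\le g(x_0)$ for every $p$, so the definition gives $b(x_0)=c(x_0)=0$, while $g(x_0)=0$ (since $0\le g\le V$); this contradicts statement (ii), which asserts $b(x_0)=c(x_0)=1$ in that case. So the lemma is \emph{false} wherever $V$ vanishes, and your proposal to ``handle the case $V\equiv 0$ locally separately'' or to prove $b(x_n)\to 1$ by a sandwich argument is not completable. You do gesture at the right ingredient (regularity of $X$ plus \eqref{eq:gpos} giving $V>0$ near $x_0$), but you treat it as one subcase of a dichotomy rather than as the global fact that dissolves the problem. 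Your verifications of (i) and (ii) on $\{V>0\}$ are fine, but note that the forward implication in (ii) (``$g(x)=0\Rightarrow b(x)=1$'') also silently uses $V(x)>0$, so the positivity of $V$ is load-bearing there too, not only for continuity.
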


\begin{proof}
We first note that $V>0$ on $\R^d$ since $X$ is regular and \eqref{eq:gpos} holds. Moreover, $V>g/2$ on $\R^d$
(since if $g(x)=0$ then $V(x)>0=g(x)/2$ and if $g(x)>0$ then $V(x)\geq g(x)>g(x)/2$).
Then strict monotonicity and continuity of $p\mapsto (1-p)V(x)$ imply that for any 
$x\in\R^d$ the value of $b(x)$ is uniquely determined by  
\begin{align}\label{eq:bx}
(1-b(x))V(x)=g(x), 
\end{align}  
and strict monotonicity and continuity of $p\mapsto(1-p)V(x)-(1-p/2)g(x)$
(since $V>g/2$ on $\R^d$) imply that $c(x)$ is uniquely determined by 
\begin{align}\label{eq:cx}
(1-c(x))V(x)=(1-p/2)g(x).
\end{align}  
Consequently,
\begin{align}\label{eq:b(x)}
b(x)=1-\frac{g(x)}{V(x)}
\end{align}
and
\begin{align}\label{eq:c(x)}
c(x)=\frac{V(x)-g(x)}{V(x)-g(x)/2}.
\end{align}
Continuity of $b$ and $c$ thus follow from Assumption~\ref{ass:cont}. Finally, (i) and (ii) follow from the fact that $V>0$ and $V\ge g$.
\end{proof}

\begin{proposition}\label{prop:Z}
Let $(p,x)\in\OC$ be given and fixed and define $\P_x$-a.s.~the process
\begin{align}\label{eq:SIZ}
Z_t:=p\wedge \inf_{0\le s\le t}b(X_s).
\end{align}
Then $\P_x$-a.s.~
\begin{itemize}
\item[(i)] $Z$ is non-increasing and continuous;
\item[(ii)] $(Z_t,X_t)\in\OC$ for all $t\ge 0$;
\item[(iii)] we have 
\begin{align}\label{eq:SK}
\ud Z_t=1_{\{(1-Z_{t})V(X_t)=g(X_t)\}}\ud Z_t
\end{align} 
as (random) measures.
\end{itemize} 
\end{proposition}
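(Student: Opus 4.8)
My plan is to treat the three claims separately, disposing of (i) and (ii) by elementary pathwise considerations and reserving the real work for (iii).

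For (i), monotonicity of $Z$ is immediate from \eqref{eq:SIZ}: enlarging the interval $[0,t]$ can only decrease the infimum of $b(X_\cdot)$, and taking the minimum with the constant $p$ preserves this. For continuity I would first note that $s\mapsto b(X_s)$ is $\P_x$-a.s.\ continuous, being the composition of the continuous function $b$ (Lemma~\ref{cont}) with the $\P_x$-a.s.\ continuous path $s\mapsto X_s$ (the process $X$ has continuous paths by assumption). The running infimum of a continuous function is continuous, and $p\wedge(\cdot)$ preserves continuity, so $Z$ is continuous. For (ii), since $t\in[0,t]$ we have $Z_t\le \inf_{0\le s\le t}b(X_s)\le b(X_t)$, and $0\le Z_t\le p<1$. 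By the characterisation \eqref{eq:C2} (equivalently, using $(1-b(X_t))V(X_t)=g(X_t)$ from \eqref{eq:bx} together with the fact that $p\mapsto(1-p)V(X_t)$ is non-increasing) this yields $(1-Z_t)V(X_t)\ge g(X_t)$, i.e.\ $(Z_t,X_t)\in\OC$.

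The heart of the proposition is (iii), i.e.\ showing that the Lebesgue--Stieltjes measure $\ud Z$ charges only the set where $Z_t=b(X_t)$. The first ingredient I would establish is a deterministic lemma on running infima: if $f:[0,\infty)\to\R$ is continuous and $m(t):=\inf_{0\le s\le t}f(s)$, then the measure $\ud m$ is supported on $\{t:f(t)=m(t)\}$. This I would prove by showing that the complementary set is open and $\ud m$-null: if $f(t_0)>m(t_0)$, then by continuity of $f$ there are $\delta,\eta>0$ with $f\ge m(t_0)+\eta$ on $(t_0-\delta,t_0+\delta)$; since the infimum defining $m(t_0)$ is then attained on $[0,t_0-\delta]$, one gets $m\equiv m(t_0)$ on $(t_0-\delta,t_0+\delta)$, so $t_0\notin\mathrm{supp}(\ud m)$.

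I would then transfer this to $Z=p\wedge I$, where $I_t:=\inf_{0\le s\le t}b(X_s)$ and $f=b(X_\cdot)$. Take $t_0\in\mathrm{supp}(\ud Z)$ and suppose, for contradiction, that $Z_{t_0}<b(X_{t_0})$; by (ii) this covers the only alternative to the desired equality. If $Z_{t_0}=p$, then $Z\equiv p$ on $[0,t_0]$, while $b(X_{t_0})>p$ and continuity of $b(X_\cdot)$ force $I_t\ge p$, hence $Z_t=p$, on a right-neighbourhood of $t_0$; thus $Z$ is locally constant, contradicting $t_0\in\mathrm{supp}(\ud Z)$. If instead $Z_{t_0}=I_{t_0}<p$, then $I_{t_0}<b(X_{t_0})$, so by the running-infimum lemma $I$ is locally constant near $t_0$, and since $I_{t_0}<p$ the identity $Z=p\wedge I$ gives $Z=I$ locally, again a contradiction. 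Hence every $t_0\in\mathrm{supp}(\ud Z)$ satisfies $Z_{t_0}=b(X_{t_0})$. Finally, since $V>0$ on $\R^d$ (as in the proof of Lemma~\ref{cont}), the map $p\mapsto(1-p)V(X_{t_0})$ is strictly decreasing, so by \eqref{eq:bx} the equality $Z_{t_0}=b(X_{t_0})$ is equivalent to $(1-Z_{t_0})V(X_{t_0})=g(X_{t_0})$; this identifies $\mathrm{supp}(\ud Z)$ with $\{t:(1-Z_t)V(X_t)=g(X_t)\}$ and yields \eqref{eq:SK}. The main obstacle is exactly this localisation: the running-infimum lemma is standard, but it must be combined carefully with the truncation $p\wedge(\cdot)$ and with the transition point where $I$ first drops below $p$.
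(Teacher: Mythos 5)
Your proof is correct and follows essentially the same route as the paper's: parts (i) and (ii) by direct pathwise inspection, and part (iii) by showing that continuity of $b(X_\cdot)$ forces $Z$ to be locally constant wherever $(1-Z_t)V(X_t)>g(X_t)$, i.e.\ wherever $Z_t<b(X_t)$. The paper gets this in one stroke from the identity $Z_{t+s}=Z_t\wedge\inf_{0<u\le s}b(X_{t+u})$, avoiding your separate running-infimum lemma and the case-split between $Z_{t_0}=p$ and $Z_{t_0}=I_{t_0}<p$, but the underlying mechanism is identical.
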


\begin{proof}
Since $x\mapsto b(x)$ and $t\mapsto X_t$ are continuous, it is immediate to verify that (i) holds. Moreover, by definition $Z_t\le b(X_t)$ for all $t\ge 0$, hence implying (ii).

In order to check that (iii) holds we fix $\omega\in\Omega$ (outside of a null set) and take an arbitrary $t\ge 0$ such that 
\begin{align*}
(1-Z_t(\omega))V(X_t(\omega))>g(X_t(\omega)).
\end{align*}
By definition of $b$, the strict inequality above implies $Z_t(\omega)<b(X_t(\omega))$. Then by continuity of $t\mapsto b(X_t)$ there exists $\delta_{t,\omega}>0$ such that $Z_t(\omega)<b(X_{t+s}(\omega))$
for all $s\in(0,\delta_{t,\omega})$. Hence
\begin{align*}
Z_{t+s}(\omega)=Z_t(\omega)\wedge\inf_{0<u\le s}b(X_{t+u}(\omega))=Z_t(\omega),\quad\text{for all $s\in(0,\delta_{t,\omega})$}.
\end{align*}
The latter equation implies $\ud Z_t(\omega)=0$ as needed in \eqref{eq:SK}. 
\end{proof}

We point out that $Z_t=Z^{p,x}_t$ depends on the initial point $(p,x)$.
Associated to $Z$ above we now construct a process $\Gamma\in\A$, which will be used to generate the randomised stopping times for the Nash equilibrium in the game. In particular, in the next lemma we will recall the belief process introduced in \eqref{Pi1} and \eqref{Pi2}.

\begin{lemma}\label{lem:gamma}
For $(p,x)\in\OC$, define the process
\begin{align}\label{eq:Gpx}
\Gamma_t:=
\Gamma^{p,x}_t:=\frac{p-Z^{p,x}_t}{p(1-Z^{p,x}_t)},\:\:t\ge 0,
\end{align}
with $Z^{p,x}=Z$ as in \eqref{eq:SIZ}. Then $\Gamma=\Gamma^{p,x}$ is continuous, $\Gamma\in\A$ and the adjusted belief process generated by $\Gamma$, i.e. the process
\begin{align}\label{eq:PiG}
\Pi^{\Gamma}_t:=\frac{p(1-\Gamma_t)}{1-p\Gamma_t},
\end{align}
satisfies $\Pi^\Gamma =Z$, $\P_x$-a.s. Hence $(\Pi^\Gamma_t,X_t)\in\overline\C$ for all $t\ge 0$, $\P_x$-a.s.
Furthermore, if $\tau\in\T^R_1$ is generated by $\Gamma$, then $\tau\leq \tau^*_V$.
\end{lemma}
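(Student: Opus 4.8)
The plan is to verify the four claims in order; all of them follow from the algebraic structure of the transform \eqref{eq:Gpx} combined with the pathwise properties of $Z$ established in Proposition~\ref{prop:Z}. It is convenient to introduce the map $f(z):=\frac{p-z}{p(1-z)}$, so that $\Gamma_t=f(Z_t)$ and $\Pi^\Gamma$ in \eqref{eq:PiG} is obtained by applying the inverse map \eqref{GammaPi} back to $\Gamma$.

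To show $\Gamma\in\A$ and that it is continuous, I would first note that, since $b$ takes values in $[0,1]$ and $p\in(0,1)$, the running infimum in \eqref{eq:SIZ} gives $Z_t\in[0,p]$, a range on which $f$ is smooth (the denominator $p(1-z)$ does not vanish). A one-line computation yields $f'(z)=\frac{p(p-1)}{[p(1-z)]^2}<0$, together with $f(p)=0$ and $f(0)=1$. Composing the strictly decreasing $f$ with the continuous, non-increasing process $Z$ (Proposition~\ref{prop:Z}(i)) shows that $\Gamma$ is continuous, non-decreasing, and valued in $[0,1]$. Adaptedness is inherited from $Z$, a running infimum of the continuous $\F^X$-adapted process $s\mapsto b(X_s)$. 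Finally, $(p,x)\in\OC$ forces $Z_0=p\wedge b(x)=p$ by \eqref{eq:C2}, hence $\Gamma_0=f(p)=0$, so the convention $\Gamma_{0-}=0$ is consistent and $\Gamma\in\A$.

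For the identity $\Pi^\Gamma=Z$, I would simply substitute $\Gamma_t=f(Z_t)$ into \eqref{eq:PiG}: since \eqref{GammaPi} and \eqref{eq:PiG} are mutually inverse, the common factor $(1-p)/(1-Z_t)$ cancels and one reads off $\Pi^\Gamma_t=\frac{p(1-\Gamma_t)}{1-p\Gamma_t}=Z_t$. The inclusion $(\Pi^\Gamma_t,X_t)=(Z_t,X_t)\in\OC$ is then exactly Proposition~\ref{prop:Z}(ii).

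The step carrying the real content is the final bound $\tau\le\tau^*_V$, where $\tau=\inf\{t\ge 0:\Gamma_t>U_1\}$ by Definition~\ref{def:rst}. On $\{\tau^*_V=\infty\}$ there is nothing to prove, so assume $\tau^*_V<\infty$. Continuity of $V$ and $g$ (Assumption~\ref{ass:cont}) makes $\{V=g\}$ closed, so the first hitting time in \eqref{tau} actually realises $V(X_{\tau^*_V})=g(X_{\tau^*_V})$; by \eqref{eq:b(x)} this means $b(X_{\tau^*_V})=0$. Since $b\ge 0$, the running infimum in \eqref{eq:SIZ} then vanishes, giving $Z_{\tau^*_V}=0$ and hence $\Gamma_{\tau^*_V}=f(0)=1$. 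As $U_1<1$ almost surely, $\Gamma_{\tau^*_V}=1>U_1$, whence $\tau\le\tau^*_V$. The only genuinely delicate point is this last one: one must argue that $b$ vanishes \emph{exactly} at $\tau^*_V$, which relies precisely on the closedness of $\{V=g\}$ so that $\Gamma$ is forced to attain its maximal value $1$ no later than $\tau^*_V$.
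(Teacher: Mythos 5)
Your proposal is correct and follows essentially the same route as the paper: the identity $\Pi^\Gamma=Z$ by inverting the algebraic transform, membership in $\A$ from composing the monotone map $z\mapsto(p-z)/[p(1-z)]$ with the continuous non-increasing $Z$, and the bound $\tau\le\tau^*_V$ by observing that $b(X)$ vanishes at $\tau^*_V$, forcing $Z_{\tau^*_V}=0$ and $\Gamma_{\tau^*_V}=1$ (the paper phrases this as the chain $\tau^*_V=\inf\{t:b(X_t)=0\}=\inf\{t:Z_t=0\}=\inf\{t:\Gamma_t=1\}$). Your extra details, the explicit derivative of $f$ and the closedness of $\{V=g\}$ guaranteeing attainment at the hitting time, are correct and only make the same argument more explicit.
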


\begin{proof}
For the first claim, by comparing \eqref{eq:Gpx} and \eqref{eq:PiG} it is immediate to check that indeed $\Pi^\Gamma=Z$. It remains to verify that $\Gamma\in\A$. This follows since $\Gamma$ is continuous, by continuity of $Z$, and it is non-decreasing because both the process $Z$ and the mapping $z\mapsto (p-z)/[p(1-z)]$ are non-increasing. 

For the second claim we have by Lemma~\ref{cont} and \eqref{eq:SIZ} that
\begin{align}\label{eq:taugamma}
\tau^*_V =& \inf\{t\geq 0:V(X_t)=g(X_t)\} \\
=& \inf\{t\geq 0: b(X_t)=0\}\notag\\
=& \inf\{t\geq 0: Z_t=0\} = \inf\{t\geq 0:\Gamma_t=1\},\notag
\end{align}
from which it follows that $\tau_u=\inf\{t\geq 0:\Gamma_t>u\}\leq \tau^*_V$ for $u\in[0,1)$. Consequently, 
$\tau\leq \tau^*_V$, $\P_x$-a.s.
\end{proof}


\section{Construction of Nash equilibria}
\label{sec:NE}

With no loss of generality, we assume that $p_2$ dominates $p_1$, i.e. $0\leq p_1\leq p_2\leq 1$. We first comment on the two extreme cases $p_1=0$ and $p_1=1$.

If $p_1=1$, then also $p_2=1$ (since $p_1\leq p_2$), so both players are certain that the other player is active. 
In this case it is clear that immediate stopping, i.e. $(\tau^*,\gamma^*):=(0,0)$, provides a Nash equilibrium, and the corresponding equilibrium value for each player is $g(x)/2$.

If $p_1=0$, then Player~1 is certain that Player~2 is not active, and will consequently play the 
optimal strategy $\tau^*=\tau_V^*$ from the single-player game. If also $p_2=0$, then
the optimal response for Player~2 is also to use $\gamma^*=\tau^*_V$, and the corresponding equilbrium value is given by $V(x)$ for each player. If instead $p_2>0$, then the situation is slightly more involved: the optimal response for Player 2 would be to preempt Player 1 by stopping just before $\tau^*_V$ (at $\tau^*_V-$); however, this is not a randomised stopping time, and there is no equilibrium in this setting.

Throughout the rest of this section we impose the condition 
\[
0<p_1\leq p_2\leq 1\quad\text{and}\quad p_1<1,
\]
thus excluding the two cases discussed above. Then we construct Nash equilibria for our game using results from the previous two sections.

One interesting feature is that our construction naturally splits into three scenarios. 
Namely, the qualitative properties of the equilibrium that we obtain depend on 
which of the three regions $\OC$, $\C'$ and $\S$ that $(p_1,x)$ belongs to. Equally remarkable seems to be the fact that the initial value of $p_2\in[p_1,1]$ does not matter for the qualitative aspects of our construction. We will comment more extensively on this feature in Section \ref{sec:comment} below. In the same section we will also discuss the heuristics that led us to the construction of the equilibria detailed in Theorems~\ref{thm:NE} and \ref{thm:NE3} below.

\subsection{Equilibrium for $(p_1,x)\in\OC\cup\C'$}\label{sec:NEsym} 
Throughout this section we consider $(p_1,x)\in\OC\cup\C'$. We first define
\begin{align}\label{eq:Gq}
\Gamma^*_0:=\frac{2}{p_1}\left(1-\frac{(1-p_1)V(x)}{g(x)}\right)^+\quad\text{and}\quad q_1:=\frac{p_1(1-\Gamma^*_0)}{1-p_1\Gamma^*_0}
\end{align}
and note that 
\begin{equation}
\label{p1q1}
(1-p_1\Gamma_0^*)(1-q_1)=1-p_1.
\end{equation}

\begin{lemma}\label{lem:5.1}
For $(p_1,x)\in\OC\cup\C'$ we have 
\begin{itemize}
\item[(i)]
$0\leq \Gamma^*_0< 1$,
\item[(ii)]
$0< q_1\leq b(x)$.
\end{itemize}
Moreover $0<q_1<b(x)$ for $(p_1,x)\in\C'$.
\end{lemma}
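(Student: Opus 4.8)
The plan is to split the analysis according to whether the positive part in the definition \eqref{eq:Gq} of $\Gamma^*_0$ is active, which corresponds exactly to the dichotomy $(p_1,x)\in\OC$ versus $(p_1,x)\in\C'$. Throughout I would use the explicit formula $b(x)=1-g(x)/V(x)$ from \eqref{eq:b(x)}, the facts $V>0$ and $V>g/2$ established in Lemma~\ref{cont}, and the identity \eqref{p1q1}. The recurring theme is that the two defining inequalities of $\C'$ map directly onto the two bounds being sought, so the whole lemma is essentially a careful unwinding of definitions.

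First I would dispose of the case $(p_1,x)\in\OC$, i.e.\ $(1-p_1)V(x)\ge g(x)$. Here $1-(1-p_1)V(x)/g(x)\le 0$ (with the convention that this quantity equals $-\infty$ when $g(x)=0$, since then $V>0$ forces $(1-p_1)V(x)>0=g(x)$), so the positive part vanishes and $\Gamma^*_0=0$. Then $q_1=p_1$ directly from \eqref{eq:Gq}. Both claims are then immediate: $0\le\Gamma^*_0=0<1$, while $0<q_1=p_1\le b(x)$ is precisely the membership $(p_1,x)\in\OC=\{p\le b(x)\}$ together with $p_1>0$; note equality $q_1=b(x)$ may occur here (when $p_1=b(x)$), which is why the bound in (ii) is non-strict.

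The substantive case is $(p_1,x)\in\C'$, i.e.\ $(1-p_1/2)g(x)<(1-p_1)V(x)<g(x)$. I would first note that the right inequality together with $V>0$ forces $g(x)>0$, so all divisions are legitimate and the positive part is active, giving $\Gamma^*_0=\tfrac{2}{p_1}\bigl(1-(1-p_1)V(x)/g(x)\bigr)>0$. A short rearrangement shows that $\Gamma^*_0<1$ is equivalent to $(1-p_1/2)g(x)<(1-p_1)V(x)$, which is exactly the left inequality of $\C'$, establishing (i). For the bound $q_1<b(x)$ I would use \eqref{p1q1} to write $1-q_1=(1-p_1)/(1-p_1\Gamma^*_0)$, substitute the value of $\Gamma^*_0$ to obtain $1-p_1\Gamma^*_0=\bigl(2(1-p_1)V(x)-g(x)\bigr)/g(x)$, and compare $1-q_1$ with $1-b(x)=g(x)/V(x)$.

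The main obstacle — really the only delicate point — is the sign bookkeeping in this final comparison. One must first check that the denominator $2(1-p_1)V(x)-g(x)$ is strictly positive, which follows from the left inequality of $\C'$ since $2(1-p_1)V(x)>(2-p_1)g(x)>g(x)$ as $p_1<1$; this legitimises the sign-preserving cross-multiplication in $1-q_1>1-b(x)$, after which the inequality collapses to $(1-p_1)V(x)<g(x)$, exactly the right inequality of $\C'$. This yields $q_1<b(x)$, and combining $\Gamma^*_0<1$ with $p_1<1$ gives $q_1>0$, so the full chain $0<q_1<b(x)$ holds on $\C'$. Collecting the two cases gives $0<q_1\le b(x)$ on $\OC\cup\C'$, with strict inequality precisely on $\C'$, as claimed.
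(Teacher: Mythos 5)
Your proof is correct and follows essentially the same route as the paper's: the same case split between $\OC$ (where $\Gamma^*_0=0$, $q_1=p_1$) and $\C'$ (where the positive part is active), with the final bound $q_1<b(x)$ obtained via the identity \eqref{p1q1}; your comparison $1-q_1>1-b(x)=g(x)/V(x)$ is just a rearrangement of the paper's inequality $(1-q_1)V(x)>g(x)$. The only (harmless) difference is that you reduce that last inequality to the right-hand defining inequality of $\C'$ by cross-multiplication, whereas the paper observes directly that the ratio $\bigl(1-\tfrac{p_1}{2}\Gamma^*_0\bigr)/\bigl(1-p_1\Gamma^*_0\bigr)$ exceeds $1$.
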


\begin{proof}
If $p_1\leq b(x)$ then $\Gamma^*_0=0$ and $q_1=p_1\in(0,b(x)]$, so (i)-(ii) hold.

If $p_1\in(b(x),c(x))$ then 
\[\Gamma^*_0=\frac{2}{p_1}\left(1-\frac{(1-p_1)V(x)}{g(x)}\right) \in(0,1)\]
so (i) holds and $q_1>0$.
Furthermore, the first equation in \eqref{eq:Gq} reads
\begin{align}\label{eq:use0}
(1-p_1)V(x)=g(x)\left(1-\frac{p_1}{2}\Gamma^*_0\right).
\end{align}
Combining the latter and \eqref{p1q1} we get
\begin{align}\label{eq:jump}
(1-q_1)V(x)=\frac{1-\frac{p_1}{2}\Gamma^*_0}{1-p_1\Gamma^*_0}g(x)>g(x)
\end{align}
since it must be $g(x)>0$ by (ii) in Lemma~\ref{cont} (otherwise $p_1\le b(x)=1$). Consequently, $(q_1,x)$ lies in the interior of the set $\OC$, so $q_1<b(x)$, which implies (ii).
\end{proof}

Recall the processes $Z$ and $\Gamma$ introduced in Proposition \ref{prop:Z} and Lemma~\ref{lem:gamma} along with the adjusted belief process $\Pi^\Gamma$. Define $\Gamma^{2,*}$ as 
\begin{align}\label{eq:G2*}
\Gamma^{2,*}_t:=\Gamma^*_0+(1-\Gamma^*_0)\Gamma^{q_1,x}_t,\quad t\geq 0,
\end{align}
with $\Gamma^*_0$ as in \eqref{eq:Gq} and $\Gamma^{q_1,x}$ as in Lemma~\ref{lem:gamma} but with $p=q_1$,  let 
\begin{align}\label{eq:G001*}
\Gamma^{1,*}_t:=\tfrac{p_1}{p_2}\Gamma^{2,*}_t 1_{\{t<\tau^*_V\}} + 1_{\{t\ge\tau^*_V\}},
\end{align}
and let 
\[\Pi^{1,*}_t:=\frac{p_1(1-\Gamma^{2,*}_t)}{1-p_1\Gamma^{2,*}_t}\]
be the adjusted belief of Player 1 corresponding to $\Gamma^{2,*}$ (recall \eqref{Pi1}).

\begin{proposition}
\label{propDelta}
For $(p_1,x)\in\OC\cup\C'$ the following properties hold:
\begin{itemize}
\item[(i)]
$(\Gamma^{1,*},\Gamma^{2,*})\in\A^2$;
\item[(ii)]
we have $\P_x$-a.s.
\[\Delta\Gamma^{2,*}_t=\left\{\begin{array}{ll}
\Gamma^*_0, & t=0,\\
0, &t>0;\end{array}\right.\]
\item[(iii)]
we have $\P_x$-a.s.
\[\Delta\Gamma^{1,*}_t=\left\{\begin{array}{ll}
\frac{p_1}{p_2}\Gamma^*_0, & t=0,\\
1-p_1/p_2, & t=\tau^*_V<\infty,\\
0, & \mbox{otherwise};\end{array}\right.\]
\item[(iv)]
$(\Pi^{1,*}_t,X_t)\in\overline{\C}$ for $t\ge 0$.
\end{itemize}
\end{proposition}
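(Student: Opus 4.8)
The plan is to establish the four assertions in the order (ii), (i), (iii), (iv), since the jump description in (ii)--(iii) is exactly what drives the admissibility check in (i), while (iv) rests on a separate algebraic simplification. The starting point is to record the basic properties of the building block $\Gamma^{q_1,x}$. By Lemma~\ref{lem:5.1}(ii) we have $(q_1,x)\in\OC$, so Lemma~\ref{lem:gamma} applies with $p=q_1$: the process $\Gamma^{q_1,x}\in\A$ is continuous, and $\Gamma^{q_1,x}_0=0$ because $Z^{q_1,x}_0=q_1\wedge b(x)=q_1$ (again Lemma~\ref{lem:5.1}(ii)). From \eqref{eq:G2*}, $\Gamma^{2,*}_t=\Gamma^*_0+(1-\Gamma^*_0)\Gamma^{q_1,x}_t$ is an affine image of $\Gamma^{q_1,x}$ with positive slope $1-\Gamma^*_0>0$ (Lemma~\ref{lem:5.1}(i)). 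This gives (ii) at once: on $(0,\infty)$ the process $\Gamma^{2,*}$ inherits continuity from $\Gamma^{q_1,x}$, so $\Delta\Gamma^{2,*}_t=0$ for $t>0$, whereas at $t=0$ the $\A$-convention $\Gamma^{2,*}_{0-}=0$ together with $\Gamma^{2,*}_0=\Gamma^*_0+(1-\Gamma^*_0)\Gamma^{q_1,x}_0=\Gamma^*_0$ yields $\Delta\Gamma^{2,*}_0=\Gamma^*_0$.

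For (i), the process $\Gamma^{2,*}$ inherits right-continuity, $\F^X$-adaptedness and monotonicity from $\Gamma^{q_1,x}$, and $\Gamma^{2,*}_t=\Gamma^*_0+(1-\Gamma^*_0)\Gamma^{q_1,x}_t\le 1$ since $\Gamma^{q_1,x}_t\le 1$; hence $\Gamma^{2,*}\in\A$. For $\Gamma^{1,*}$ I would use \eqref{eq:taugamma} with $p=q_1$, namely $\tau^*_V=\inf\{t\ge 0:\Gamma^{q_1,x}_t=1\}$, which by continuity forces $\Gamma^{q_1,x}_{\tau^*_V}=1$ and thus $\Gamma^{2,*}_{\tau^*_V}=1$ on $\{\tau^*_V<\infty\}$. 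On $[0,\tau^*_V)$ we have $\Gamma^{1,*}_t=\tfrac{p_1}{p_2}\Gamma^{2,*}_t$, which is non-decreasing and bounded by $\tfrac{p_1}{p_2}\le 1$ (this is where $p_1\le p_2$ enters), with left-limit $p_1/p_2$ at $\tau^*_V$; from $\tau^*_V$ onward it equals the constant $1$. Hence $\Gamma^{1,*}$ has a single upward jump of size $1-p_1/p_2\ge 0$ at $\tau^*_V$, is otherwise non-decreasing, is right-continuous (the indicators in \eqref{eq:G001*} are right-continuous in $t$ and $\tau^*_V$ is an $\F^X$-stopping time), adapted and bounded by $1$; so $\Gamma^{1,*}\in\A$.

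Part (iii) is then read off from this same description. Note first that $\tau^*_V>0$ $\P_x$-a.s.: since $(p_1,x)\in\OC\cup\C'$ with $p_1>0$ forces $b(x)>0$ or $c(x)>0$, the formulae \eqref{eq:b(x)}--\eqref{eq:c(x)} give $V(x)>g(x)$, so $X$ remains in the open set $\{V>g\}$ for small times. Consequently at $t=0$ we have $\Delta\Gamma^{1,*}_0=\tfrac{p_1}{p_2}\Gamma^{2,*}_0=\tfrac{p_1}{p_2}\Gamma^*_0$; at $t=\tau^*_V<\infty$ the jump is $1-\tfrac{p_1}{p_2}\Gamma^{2,*}_{\tau^*_V-}=1-p_1/p_2$ (using $\Gamma^{2,*}_{\tau^*_V-}=1$); and $\Gamma^{1,*}$ is continuous at all other times.

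The crux of the proposition, and the step I expect to require the one genuine computation, is (iv). Here I would substitute \eqref{eq:G2*} into $\Pi^{1,*}_t=\tfrac{p_1(1-\Gamma^{2,*}_t)}{1-p_1\Gamma^{2,*}_t}$, use the factorisation $1-\Gamma^{2,*}_t=(1-\Gamma^*_0)(1-\Gamma^{q_1,x}_t)$, and invoke the defining relation $q_1=\tfrac{p_1(1-\Gamma^*_0)}{1-p_1\Gamma^*_0}$ from \eqref{eq:Gq} (equivalently \eqref{p1q1}) to cancel the common factor $1-p_1\Gamma^*_0$, arriving at
\[
\Pi^{1,*}_t=\frac{q_1(1-\Gamma^{q_1,x}_t)}{1-q_1\Gamma^{q_1,x}_t}.
\]
The right-hand side is exactly the adjusted belief $\Pi^{\Gamma^{q_1,x}}$ generated by $\Gamma^{q_1,x}$ in the sense of \eqref{eq:PiG} with $p=q_1$, so Lemma~\ref{lem:gamma} gives $\Pi^{1,*}=Z^{q_1,x}$, $\P_x$-a.s. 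Since $(q_1,x)\in\OC$, Proposition~\ref{prop:Z}(ii) applied with $p=q_1$ yields $(Z^{q_1,x}_t,X_t)\in\OC$ for all $t\ge 0$, which is precisely (iv). The main obstacle is therefore bookkeeping rather than analysis: one must get the two nested belief transformations to collapse so that the twice-adjusted belief $\Pi^{1,*}$ coincides with the reflected process $Z^{q_1,x}$ started from $q_1$ instead of $p_1$, the jump $\Gamma^*_0$ at time $0$ being exactly what relocates the initial belief from $p_1$ to $q_1\le b(x)$ so that the process lives in $\OC$ thereafter.
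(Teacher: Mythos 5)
Your proof is correct and follows essentially the same route as the paper's: the same affine decomposition of $\Gamma^{2,*}$ in terms of the continuous process $\Gamma^{q_1,x}$, the same use of \eqref{eq:taugamma} to identify the jump of $\Gamma^{1,*}$ at $\tau^*_V$, and the same algebraic collapse $\Pi^{1,*}=\Pi^{q_1,x}=Z^{q_1,x}$ for (iv). Your explicit observation that $(p_1,x)\in\OC\cup\C'$ forces $V(x)>g(x)$ and hence $\tau^*_V>0$ $\P_x$-a.s.\ is a small detail the paper leaves implicit, but it does not change the argument.
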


\begin{proof}
Recalling the expression in \eqref{eq:Gpx} for $\Gamma^{q_1,x}$ and that of $Z^{q_1,x}$ in \eqref{eq:SIZ} it is immediate to verify that $\Gamma^{q_1,x}\in[0,1]$ and it is continuous. Hence, $\Gamma^{2,*}\in[0,1]$ and it is right-continuous with its only possible jump at time zero. Thus $(\Gamma^{1,*},\Gamma^{2,*})\in\A^2$ so (i) holds. 

The expressions in (ii) and (iii) for 
$\Delta\Gamma^{2,*}_0$ and $\Delta\Gamma^{1,*}_0$ are immediate consequence of 
\eqref{eq:G2*} and \eqref{eq:G001*}. Furthermore, on $\{\tau^*_V<\infty\}$ we have $\Gamma^{q_1,x}_t=1$ for $t>
\tau^*_V$ by \eqref{eq:taugamma}, so continuity of $\Gamma^{q_1,x}$ implies that 
$\Gamma^{q_1,x}_{\tau^*_V-}=1$. Therefore 
\[\Delta\Gamma^{1,*}_{\tau^*_V}=1-\frac{p_1}{p_2}\Gamma^{2,*}_{\tau^*_V-}=1-\frac{p_1}{p_2},\]
which completes the proof of (iii).

For (iv), recalling the expression \eqref{Pi1} for the adjusted belief $\Pi^{1,*}$ of Player~1,
simple algebra yields
\begin{eqnarray}\label{eq:pis}
\Pi_t^{1,*}&=& \frac{p_1(1-\Gamma^{2,*}_t)}{1-p_1\Gamma^{2,*}_t}
=\frac{p_1(1-\Gamma^*_0)(1-\Gamma^{q_1,x}_t)}{1-p_1\Gamma^*_0-p_1(1-\Gamma^*_0)\Gamma^{q_1,x}_t}\\
&=&\frac{q_1(1-\Gamma^{q_1,x}_t)}{1-q_1\Gamma^{q_1,x}_t}=\Pi^{q_1,x}_t.\notag
\end{eqnarray}
Now \eqref{eq:pis} implies $(\Pi^{1,*}_t,X_t)\in\overline{\C}$ for $t\ge 0$, $\P_x$-a.s.~by virtue of 
Lemma~\ref{lem:gamma}, so (iv) holds.
\end{proof}

We are now ready to formulate our main result.

\begin{theorem}\label{thm:NE}
Let $(p_1,x)\in\overline \C\cup\C'$ be given and fixed (equivalently, $p_1<c(x)$), and 
define $(\Gamma^{1,*},\Gamma^{2,*})\in\A^2$ as in \eqref{eq:G001*} and \eqref{eq:G2*} above.
Then the strategy pair $(\tau^*,\gamma^*)$ generated by $(\Gamma^{1,*},\Gamma^{2,*})$~is a Nash equilibrium. Moreover, the equilibrium payoff for both players is $(1-p_1)V(x)$.
\end{theorem}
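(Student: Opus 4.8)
The plan is to verify the two Nash inequalities of Definition~\ref{def:NE} separately, exploiting the explicit formulae for the payoffs from Proposition~\ref{altform} together with the supermartingale/martingale structure recorded in Remark~\ref{rem:OS}. By \eqref{eq:ran1}--\eqref{eq:ran2} it suffices to check optimality of each player's response within the class $\T$ of \emph{ordinary} stopping times, which simplifies matters considerably. The key algebraic tool will be the identity \eqref{alg}, rewritten through \eqref{eq:pis} as $(1-p_1\Gamma^{2,*}_t)(1-\Pi^{1,*}_t)=1-p_1$, which converts the $\Gamma$-dependent integrand in \eqref{eq:J1} into a clean expression involving $(1-\Pi^{1,*}_t)V(X_t)$.

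\textbf{Player 1's optimality and the value computation.} First I would fix Player~2's strategy $\gamma^*$ (generated by $\Gamma^{2,*}$) and show that $\tau^*$ is a best response. Substituting $\Gamma^{2,*}$ into \eqref{eq:J1} and using \eqref{alg} in the form above, the first two terms combine to $(1-p_1\Gamma^{2,*}_\zeta)(1-\Pi^{1,*}_\zeta)$-weighting; since $(\Pi^{1,*}_t,X_t)\in\overline\C$ for all $t$ by Proposition~\ref{propDelta}(iv), on this set $(1-\Pi^{1,*}_t)V(X_t)\ge g(X_t)$, and on the reflection boundary $\partial\C$ equality holds by \eqref{eq:SK}. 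The natural candidate for the equilibrium payoff is $(1-p_1)V(x)$, so I expect the proof to reduce to showing $\J_1(\zeta,\gamma^*;p_1,x)\le (1-p_1)V(x)$ for all $\zeta\in\T$ with equality at $\zeta=\tau^*$. The upper bound follows by dominating $g$ on the relevant terms by $(1-\Pi^{1,*})V$ and recognising that $Y_t=e^{-rt}V(X_t)$ is a supermartingale, so $\E_x[e^{-r\zeta}V(X_\zeta)]\le V(x)$; the factor $(1-p_1)$ is produced by the identity \eqref{p1q1}/\eqref{alg} governing the initial jump $\Gamma^*_0$. For equality at $\tau^*$, I would use that $\tau^*\le\tau^*_V$ (Lemma~\ref{lem:gamma}), that $M_t$ is a uniformly integrable martingale up to $\tau^*_V$, and that along $\tau^*$ the reflection keeps $(\Pi^{1,*},X)$ exactly on the boundary where $(1-\Pi^{1,*})V=g$, together with the jump term at $\tau^*_V$ carrying the $\tfrac12\Delta\Gamma$ contribution from \eqref{eq:J1}.

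\textbf{Player 2's optimality.} Symmetrically, fixing $\tau^*$ (generated by $\Gamma^{1,*}$) I would insert $\Gamma^{1,*}$ into \eqref{eq:J2} and show $\gamma^*$ is optimal. Here the structure \eqref{eq:G001*}, with the scaling factor $p_1/p_2$ and the terminal jump $1-p_1/p_2$ at $\tau^*_V$, is designed precisely so that Player~2 is rendered \emph{indifferent}: the integrand $(1-p_2\Gamma^{1,*}_t)$ should, via the analogue of \eqref{alg} for index $i=2$, collapse so that Player~2's payoff from any $\zeta\in\T$ is dominated by the same value and the supermartingale bound again gives the ceiling $V(x)$, while the specific randomisation makes every stopping time in the no-action region a best response. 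I expect to verify that $\gamma^*$ attains the supremum and to confirm the claimed payoff value, checking consistency between the two players' equilibrium values.

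\textbf{Main obstacle.} The hardest part will be establishing the sharp upper bound for an \emph{arbitrary} $\zeta\in\T$ rather than merely evaluating the payoff at the candidate $\tau^*$ or $\gamma^*$. This requires carefully handling the three terms in \eqref{eq:J1}/\eqref{eq:J2}, controlling the jump term $\tfrac{p_i}{2}g(X_\zeta)\Delta\Gamma_\zeta$ (nonzero only at $t=0$ and $t=\tau^*_V$ by Proposition~\ref{propDelta}(ii)--(iii)), and invoking optional stopping for the supermartingale $Y$ under the integrability and limit conditions \eqref{integrability}--\eqref{limsup} to rule out loss of mass at infinity on $\{\tau^*_V=\infty\}$. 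Matching the boundary behaviour encoded by \eqref{eq:SK} to the pointwise inequality $(1-\Pi^{1,*})V\ge g$ across the whole trajectory — so that the reflection strategy neither overshoots nor leaves value on the table — is the crux that ties the construction to optimality.
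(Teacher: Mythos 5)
Your treatment of Player 1's best response is essentially the paper's argument: bound $\J_1(\zeta,\gamma^*;p_1,x)$ for $\zeta\in\T$ by combining the factorisation $1-p_1\Gamma^{2,*}_\zeta=(1-p_1\Gamma^*_0)(1-q_1\Gamma^{q_1,x}_\zeta)$ with the domination $g(X_\zeta)\le(1-\Pi^{1,*}_\zeta)V(X_\zeta)$ on $\OC$, the identity $(1-q_1\Gamma^{q_1,x}_\zeta)(1-\Pi^{1,*}_\zeta)=1-q_1$, and optional sampling for the supermartingale $Y$; equality at $\tau^*_u$ then follows because $\ud\Gamma^{2,*}$ is supported where $(1-\Pi^{1,*})V=g$. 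One small slip here: $\Gamma^{2,*}$ jumps only at $t=0$ (Proposition~\ref{propDelta}(ii)), so there is no $\tfrac12\Delta\Gamma$ contribution at $\tau^*_V$ in $\J_1$; the jump at $\tau^*_V$ belongs to $\Gamma^{1,*}$ and enters only $\J_2$.

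The genuine gap is in your Step for Player 2: the argument is \emph{not} symmetric, and the route you sketch would fail. The analogue of \eqref{alg} for $i=2$ reads $(1-p_2\Gamma^{1,*}_t)(1-\Pi^{2,*}_t)=1-p_2$, and to exploit it you would need $g(X_t)\le(1-\Pi^{2,*}_t)V(X_t)$, i.e.\ $(\Pi^{2,*}_t,X_t)\in\OC$. This is false in general: $\Pi^{2,*}_t\ge\Pi^{1,*}_t$ with strict inequality when $p_2>p_1$, so $(\Pi^{2,*},X)$ lies outside $\OC$ precisely when $(\Pi^{1,*},X)$ sits on the boundary $p=b(x)$, which is where all the stopping happens. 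Moreover, that identity would naturally produce the constant $(1-p_2)V(x)$, whereas the claimed equilibrium payoff for Player 2 is $(1-p_1)V(x)$. The paper's actual route is an asymmetric comparison: it proves $\J_2(\tau^*,\zeta;p_2,x)\le\J_1(\zeta,\gamma^*;p_1,x)$ for every $\zeta\in\T$, by substituting $p_2\Gamma^{1,*}_\zeta=p_1\Gamma^{2,*}_\zeta$ on $\{\zeta<\tau^*_V\}$ and absorbing the residual term $(p_1-p_2)g$ on $\{\zeta\ge\tau^*_V\}$ into the jump term $\tfrac{p_2}{2}g(X_\zeta)\Delta\Gamma^{1,*}_\zeta$, using $p_1\le p_2$ and $\Delta\Gamma^{1,*}_{\tau^*_V}=1-p_1/p_2$; the value for Player 2 then follows from Step 1 via $\gamma^*_u=\tau^*_{(p_1/p_2)u}$. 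This reduction of Player 2's inequality to Player 1's is the missing idea, and it is also where the hypothesis $p_1\le p_2$ is actually used.
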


\begin{proof}
Consider the process $\{N_t,\,t\ge 0\}$ defined as follows: for fixed $(p_1,x)\in\overline\C\cup\C'$ let
\begin{align}\label{eq:N}
N_t&:=\left(1-\frac{p_1}{2}\Gamma^*_0\right)g(x)1_{\{t=0\}}+\tilde N_t 1_{\{t>0\}}
\end{align}
where
\[
\tilde N_t:=(1_{\{\theta_1=0\}} + 1_{\{\theta_1=1, U_2\ge\Gamma^*_0\}})(1-q_1)e^{-rt}V(X_t),
\]
and note that \eqref{eq:Gq}, \eqref{eq:use0} and \eqref{eq:N} imply
\begin{equation}
\label{N0}
N_0=g(x)\wedge \left((1-p_1)V(x)\right).
\end{equation}
With the notation $Y_t:=e^{-rt}V(X_t)$ introduced in Remark \ref{rem:OS}, recall that $\{Y_t,0\le t\le \infty\}$ with 
$Y_\infty:=0$ is a supermartingale. Thus, by \eqref{p1q1} and the optional sampling theorem, for any $\tau\in\T$, we have 
\begin{eqnarray}
\E_x\left[N_\tau\right]&=&N_01_{\{\tau=0\}}+1_{\{\tau>0\}}(1\!-\!p_1\Gamma^*_0)(1\!-\!q_1)\E_x\left[e^{-r\tau}V(X_{\tau})1_{\{\tau<\infty\}}\right]\notag\\
&=&
N_0 1_{\{\tau=0\}}+1_{\{\tau>0\}}(1\!-\!p_1)\E_x\left[e^{-r\tau}V(X_{\tau})1_{\{\tau<\infty\}}\right]\notag\\
&\le & N_01_{\{\tau=0\}}+1_{\{\tau>0\}}(1\!-\!p_1)V(x),\notag
\end{eqnarray}
where we have used that $\P_x(\tau=0)=1_{\{\tau=0\}}$ and $\P_x(\tau>0)=1_{\{\tau>0\}}$ by the $0-1$-Law. Finally, recalling \eqref{N0} we get
\begin{align}\label{eq:NJ00}
\E_x\left[N_\tau\right]\le (1-p_1)V(x),\qquad\text{for all $\tau\in\T$}.
\end{align}

Repeating the same steps as above and using that $M_t= Y_{t\wedge\tau_V^*}$ is a uniformly integrable martingale (Remark \ref{rem:OS}), we also obtain
\begin{align}\label{eq:NJ02}\begin{array}{ll}
\E_x\left[N_{\tau\wedge\tau_V^*}\right]= (1-p_1)V(x), & \text{for all $\tau\in\T$ with $\tau>0$ and}\\
 & \text{for $\tau=0$ if $\Gamma^*_0>0$}.\end{array}
\end{align}

Now we proceed in two steps.
\vspace{+4pt}

{\em Step 1}. (Optimality of $\tau^*$.) First we show that 
\begin{align}\label{eq:NJ}
\sup_{\tau\in\T}\J_1(\tau,\gamma^*;p_1,x)\le\sup_{\tau\in\T}\E_x[N_\tau]. 
\end{align}
Recalling Proposition \ref{altform} and that the only possible jump of $\Gamma^{2,*}$ occurs at $t=0$ 
(Proposition~\ref{propDelta}) we obtain, for any $\tau\in\T$,
\begin{align}\label{eq:NJ1}
\J_1(\tau,\gamma^*;p_1,x)
=&
\E_x\left[e^{-r \tau}g(X_\tau)(1-p_1\Gamma^{2,*}_\tau)1_{\{\tau<\infty\}}\right]+\frac{p_1}{2}g(x)\Gamma^*_01_{\{\tau=0\}}\\
=&\notag 1_{\{\tau>0\}}\E_x\left[e^{-r \tau}g(X_\tau)(1-p_1\Gamma^{2,*}_\tau)1_{\{\tau<\infty\}}\right]\notag\\
&+1_{\{\tau=0\}}g(x)\left(1-p_1\Gamma^*_0 +\frac{p_1}{2}\Gamma^*_0\right)\notag\\
=&1_{\{\tau=0\}} N_0+1_{\{\tau>0\}}\E_x\left[e^{-r \tau}g(X_\tau)(1-p_1\Gamma^{2,*}_\tau)1_{\{\tau<\infty\}}\right].\notag
\end{align}

By definition (see \eqref{eq:G2*}) we have
\begin{eqnarray*}
1-p_1\Gamma^{2,*}_\tau &=&1-p_1\Gamma^*_0-p_1(1-\Gamma^*_0)\Gamma^{q_1,x}_\tau\\
&=&(1-p_1\Gamma^*_0)(1-q_1\Gamma^{q_1,x}_\tau),
\end{eqnarray*}
where the second equality follows from \eqref{eq:Gq}. Substituting the last expression in \eqref{eq:NJ1} and using that $g(X_\tau)\le(1-\Pi^{1,*}_\tau)V(X_\tau)$ (see (iv) in Proposition \ref{propDelta}) we arrive at
\begin{align}\label{eq:NJ2}
\J_1(\tau,\gamma^*;p_1,x)
\le& 1_{\{\tau=0\}} N_0\\
&+1_{\{\tau>0\}}(1-p_1\Gamma^*_0)\E_x\left[e^{-r \tau}(1-q_1\Gamma^{q_1,x}_\tau)(1-\Pi^{1,*}_\tau)V(X_\tau)\right]\notag\\
=& \notag
1_{\{\tau=0\}} N_0+1_{\{\tau>0\}}(1-p_1\Gamma^*_0)(1-q_1)\E_x\left[e^{-r \tau}V(X_\tau)\right]\\
=& \E_x\left[N_\tau\right],\notag
\end{align}
where in the first equality we used $(1-q_1\Gamma^{q_1,x}_\tau)(1-\Pi^{1,*}_\tau)=(1-q_1)$, due to \eqref{eq:pis}, and the second one follows from \eqref{eq:N}. Thus we have proved \eqref{eq:NJ}. Combining the latter with \eqref{eq:NJ00} and \eqref{eq:ran1} gives
\begin{align}\label{eq:NJ3}
\sup_{\tau\in\T^R_1}\J_1(\tau,\gamma^*;p_1,x)\le (1-p_1)V(x).
\end{align} 

Now, take $\tau^*_u$ as in \eqref{eq:tau-u} but with $\Gamma^{1,*}$ instead of $\Gamma$. Then, since
$\Gamma^{1,*}_t=1$ for $t\geq \tau^*_V$ (by \eqref{eq:taugamma}), we have $\tau^*_u\le \tau^*_V$ for all $u\in[0,1)$. Furthermore, we claim that 
\begin{equation}
\label{eq:equal}
(1-\Pi^{1,*}_{\tau^*_u})V(X_{\tau^*_u})1_{\{\tau^*_u<\infty\}} =g(X_{\tau^*_u})1_{\{\tau^*_u<\infty\}}\,.
\end{equation}
Indeed, to see that \eqref{eq:equal} holds, first assume that $u\in[0,p_1/p_2)$. Then, recalling that $\Gamma^{2,*}$ is continuous for $t>0$ and $\Gamma^{2,*}<1$ for $t<\tau^*_V$, we have 
\[\tau^*_u=\inf\{t\geq 0:\Gamma^{1,*}_t>u\}=\inf\{t\geq 0:\Gamma^{2,*}_t>(p_2/p_1)u\}=\gamma^*_{(p_2/p_1)u}.\]
Hence \eqref{eq:equal} holds because $\tau^*_u$ is a time of increase of $\Gamma^{2,*}$ and $t\mapsto \ud \Gamma^{2,*}_t$ is supported on
\[
\{t\ge 0: (1-\Pi^{1,*}_t)V(X_t)=g(X_t)\}
\]
(see Proposition \ref{prop:Z} and Lemma \ref{lem:gamma}). On the other hand, if $u\in[p_1/p_2,1)$, then $\tau^*_u=\tau^*_V$, and 
\[
(1-\Pi^{1,*}_{\tau^*_u})V(X_{\tau^*_u})
=(1-\Pi^{1,*}_{\tau^*_V})V(X_{\tau^*_V})=V(X_{\tau^*_V})=g(X_{\tau^*_V})\]
on $\{\tau^*_V<\infty\}$
since $\Pi^{1,*}_{\tau^*_V}=Z^{p_1,x}_{\tau^*_V}=0$ due to \eqref{eq:taugamma} in the proof of Lemma \ref{lem:gamma}.

Then, putting $\tau=\tau^*_u$ in \eqref{eq:NJ1}, the inequality in \eqref{eq:NJ2} becomes an equality and we find
\begin{align}\label{eq:NJ5}
\J_1(\tau^*_u,\gamma^*;p_1,x)=\E_x[N_{\tau^*_u}]=(1-p_1)V(x),
\end{align}
where the final equality follows from \eqref{eq:NJ02}.

Integrating for $u\in[0,1)$ and recalling that $\tau^*=\inf\{t\ge 0\,:\,\Gamma^{1,*}>U_1\}$ (see Definition \ref{def:rst}) we finally obtain
\[
\J_1(\tau^*,\gamma^*;p_1,x)=(1-p_1)V(x)\ge \sup_{\tau\in\T^R_1}\J_1(\tau,\gamma^*;p_1,x).
\]
Hence, $\tau^*$ is an optimal response to $\gamma^*$.
\vspace{+4pt}

{\em Step 2.} (Optimality of $\gamma^*$.) It remains to check that $\gamma^*$ is an optimal response for Player 2 to Player 1's use of $\tau^*$. 
From \eqref{eq:J2}, for any $\zeta\in\T$ we obtain
\begin{eqnarray}\label{eq:zeta}
\J_2(\tau^*,\zeta;p_2,x)
&=&\E_x\left[e^{-r\zeta}(1-p_2\Gamma^{1,*}_\zeta)g(X_\zeta)1_{\{\zeta<\infty\}}\right]\\
&&+\frac{p_2}{2}\E_x\left[e^{-r\zeta}g(X_\zeta)\Delta\Gamma^{1,*}_\zeta1_{\{\zeta<\infty\}}\right]\notag\\
&=&\E_x\left[1_{\{\zeta<\tau^*_V\}}e^{-r\zeta}(1-p_2\Gamma^{1,*}_\zeta)g(X_\zeta)1_{\{\zeta<\infty\}}\right]\notag\\
&&+\E_x\left[1_{\{\zeta\ge \tau^*_V\}}e^{-r\zeta}(1-p_2)g(X_\zeta)1_{\{\zeta<\infty\}}\right]\notag\\
&&+\frac{p_2}{2}\E_x\left[e^{-r\zeta}g(X_\zeta)\Delta\Gamma^{1,*}_\zeta1_{\{\zeta<\infty\}}\right]\notag\\
&=&\E_x\left[1_{\{\zeta<\tau^*_V\}}e^{-r\zeta}(1-p_1\Gamma^{2,*}_\zeta)g(X_\zeta)1_{\{\zeta<\infty\}}\right]\notag\\
&&+\E_x\left[1_{\{\zeta\ge \tau^*_V\}}e^{-r\zeta}(1-p_1)g(X_\zeta)1_{\{\zeta<\infty\}}\right]\notag\\
&&+\E_x\left[1_{\{\zeta\ge \tau^*_V\}}e^{-r\zeta}(p_1-p_2)g(X_\zeta)1_{\{\zeta<\infty\}}\right]\notag\\
&&+\frac{p_2}{2}\E_x\left[e^{-r\zeta}g(X_\zeta)\Delta\Gamma^{1,*}_\zeta1_{\{\zeta<\infty\}}\right],\notag
\end{eqnarray}
where for the third equality we used  
$p_2\Gamma^{1,*}_\zeta=p_1\Gamma^{2,*}_\zeta$ on the event $\{\zeta<\tau^*_V\}$. 
Recalling that $p_1\leq p_2$, we have
\begin{eqnarray}\label{eq:zeta1}
&&\hspace{-15mm}\E_x\left[1_{\{\zeta\ge \tau^*_V\}}e^{-r\zeta}(p_1-p_2)g(X_\zeta)1_{\{\zeta<\infty\}}\right]\\
\notag
&=& -p_2\E_x\left[1_{\{\zeta\geq \tau^*_V\}}e^{-r\zeta}(1-p_1/p_2)g(X_\zeta)1_{\{\zeta<\infty\}}\right]\notag\\
&\leq & -p_2\E_x\left[1_{\{\zeta\ge \tau^*_V\}}e^{-r\zeta}\Delta\Gamma^{1,*}_\zeta g(X_\zeta)1_{\{\zeta<\infty\}}\right],\notag
\end{eqnarray}
where the inequality uses that 
\[\Delta\Gamma^{1,*}_{\zeta}1_{\{\zeta\ge \tau^*_V\}}=\Delta\Gamma^{1,*}_{\zeta}1_{\{\zeta = \tau^*_V\}}=(1-p_1/p_2)1_{\{\zeta = \tau^*_V\}}\leq (1-p_1/p_2)1_{\{\zeta \geq \tau^*_V\}}\]
on $\{\zeta<\infty\}$ by (iii) in Proposition~\ref{propDelta}.

Combining \eqref{eq:zeta1} with \eqref{eq:zeta} gives
\begin{eqnarray*}
\J_2(\tau^*,\zeta;p_2,x)
&\le &\E_x\left[e^{-r\zeta}(1-p_1\Gamma^{2,*}_\zeta)g(X_\zeta)1_{\{\zeta<\infty\}}\right]\\
&&\hspace{-10mm}+\frac{p_2}{2}\Gamma^{1,*}_0 g(x)1_{\{\zeta=0\}}
-\frac{p_2}{2}\E_x\left[e^{-r\zeta}g(X_\zeta)\Delta\Gamma^{1,*}_\zeta 1_{\{\zeta=\tau^*_V<\infty\}}\right]\notag
\end{eqnarray*}
upon recalling that $\Gamma^{1,*}$ only jumps at time zero and at $\tau^*_V$. Next, recalling (ii) from Proposition \ref{propDelta} and using \eqref{eq:J1} and $p_2\Gamma^{1,*}_0=p_1 \Gamma^{*}_0$ we obtain
\begin{eqnarray*}
\J_2(\tau^*,\zeta;p_2,x) &\le& \J_1(\zeta,\gamma^*;p_1,x)-\frac{p_2}{2}\E_x\left[e^{-r\zeta}g(X_\zeta)\Delta\Gamma^{1,*}_\zeta 1_{\{\zeta=\tau^*_V<\infty\}}\right]\\
&\le& \J_1(\zeta,\gamma^*;p_1,x).
\end{eqnarray*}
It then follows, as in \eqref{eq:NJ3} (recall also \eqref{eq:ran2}), that
\begin{align}\label{eq:NJ4}
\sup_{\zeta\in\T^R_2}\J_2(\tau^*,\zeta;p_2,x)\le (1-p_1)V(x).
\end{align}

In order to prove that equality holds  we choose $\zeta=\gamma^*_u$, with $\gamma^*_u$ as in \eqref{eq:tau-u} but with $\Gamma^{2,*}$ in place of $\Gamma$. We first notice that
\begin{align}\label{eq:NJ6}
&\J_2(\tau^*,\gamma^*_u;p_2,x)\\
&=\E_x\left[e^{-r\gamma^*_u}(1-p_1\Gamma^{2,*}_{\gamma^*_u})g(X_{\gamma^*_u})1_{\{\gamma^*_u<\infty\}}\right]+\frac{p_1}{2}g(x)\Gamma^{2,*}_01_{\{\gamma^*_u=0\}}\notag\\
&= \J_1(\gamma^*_u,\gamma^*;p_1,x)\notag
\end{align}
where we used that $p_2\Gamma^{1,*}_{\gamma^*_u}=p_1\Gamma^{2,*}_{\gamma^*_u}$, since $\gamma^*_u<\tau^*_V$
on $\{\tau^*_V<\infty\}$, $\P_x$-a.s. for $u\in[0,1)$.
Next, it is clear that \eqref{eq:equal} holds with $\gamma^*_u$ in place of $\tau^*_u$ because 
$\gamma^*_u=\tau^*_{\frac{p_1}{p_2}u}$.
Finally, letting $\tau=\gamma^*_u$ in \eqref{eq:NJ1}, the same argument that led to \eqref{eq:NJ5}, combined with \eqref{eq:NJ6}, gives
\[\J_2(\tau^*,\gamma^*_u;p_2,x)= (1-p_1)V(x).\]

Integrating over $u\in[0,1)$ and recalling that $\gamma^*=\inf\{t\ge0\,:\,\Gamma^{2,*}_t>U_2\}$ we finally conclude that
\[\sup_{\gamma\in\T^R_2}\J_2(\tau^*,\gamma;p_2,x)=\J_2(\tau^*,\gamma^*;p_2,x)= (1-p_1)V(x).\]
Hence, $\gamma^*$ is an optimal response to $\tau^*$, which completes the proof.
\end{proof}

\subsection{Equilibrium for $(p_1,x)\in\S$}
It only remains to consider an equilibrium for the game starting with $(p_1,x)\in\S$.

\begin{theorem}\label{thm:NE3}
Let $(p_1,x)\in\S$ be given and fixed. 
Then the strategy pair $(\tau^*,\gamma^*)=(0,0)$, $\P_x$-a.s.~is a Nash equilibrium. Moreover, the equilibrium payoff of the $i$-th Player is $(1-p_i/2)g(x)$, for $i=1,2$.
\end{theorem}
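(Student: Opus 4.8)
The plan is to verify directly that immediate stopping by both players constitutes a Nash equilibrium; the computation is short, and the only genuine subtlety is that the ordering $p_1\le p_2$ must be invoked to control Player~2's deviations.

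First I would compute the equilibrium payoffs. Since $(\tau^*,\gamma^*)=(0,0)$ $\P_x$-a.s., we have $\P(\gamma^*>0)=0$ and $\P(\gamma^*=0)=1$, so substituting $\tau=\gamma=0$ into \eqref{eq:Ji0} makes the middle term vanish and yields
\[\J_i(\tau^*,\gamma^*;p_i,x)=(1-p_i)g(x)+\tfrac{p_i}{2}g(x)=(1-p_i/2)g(x),\qquad i=1,2,\]
which identifies the claimed equilibrium values.

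Next I would check that $\tau^*=0$ is an optimal response to $\gamma^*$. By \eqref{eq:ran1} it suffices to consider normal stopping times $\zeta\in\T$. The strategy $\gamma^*=0$ is generated by the process $\Gamma^2\equiv 1$ (that is, $\Gamma^2_t=1$ for all $t\ge 0$, with $\Delta\Gamma^2_0=1$), so Proposition~\ref{altform} (formula \eqref{eq:J1}) reduces to
\[\J_1(\zeta,\gamma^*;p_1,x)=(1-p_1)\E_x[e^{-r\zeta}g(X_\zeta)1_{\{\zeta<\infty\}}]+\tfrac{p_1}{2}g(x)1_{\{\zeta=0\}}.\]
By the Blumenthal $0$--$1$ law $\P_x(\zeta=0)\in\{0,1\}$. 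On $\{\zeta=0\}$ the right-hand side equals $(1-p_1/2)g(x)$, while on $\{\zeta>0\}$ it equals $(1-p_1)\E_x[e^{-r\zeta}g(X_\zeta)1_{\{\zeta<\infty\}}]\le(1-p_1)V(x)$ by the definition \eqref{V} of $V$. Since $(p_1,x)\in\S$, the defining inequality \eqref{eq:Spi} gives $(1-p_1)V(x)\le(1-p_1/2)g(x)$, and hence $\J_1(\zeta,\gamma^*;p_1,x)\le(1-p_1/2)g(x)$ in either case, as required.

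Finally I would treat Player~2's deviations in the same way, using \eqref{eq:ran2} together with \eqref{eq:J2} and $\Gamma^1\equiv 1$, to obtain on the two events $\{\zeta=0\}$ and $\{\zeta>0\}$ the values $(1-p_2/2)g(x)$ and a quantity bounded by $(1-p_2)V(x)$, respectively. The single extra ingredient, and the crux of the argument, is that $(p_2,x)\in\S$ as well: since $p\mapsto(1-p)V(x)-(1-p/2)g(x)$ is non-increasing and $(p_1,x)\in\S$ means $c(x)\le p_1\le p_2$, the inequality $(1-p_2)V(x)\le(1-p_2/2)g(x)$ holds. This yields $\J_2(\tau^*,\zeta;p_2,x)\le(1-p_2/2)g(x)$, completing the verification of both equilibrium inequalities.
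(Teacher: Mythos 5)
Your proposal is correct and follows essentially the same route as the paper's proof: reduce to pure stopping times via \eqref{eq:ran1}--\eqref{eq:ran2}, write the payoff against the opponent's immediate stop, split on $\{\zeta=0\}$ versus $\{\zeta>0\}$, and close the $\{\zeta>0\}$ case with $(1-p_i)V(x)\le(1-p_i/2)g(x)$, where for $i=2$ this follows from $p_2\ge p_1$ and monotonicity in $p$. The only cosmetic difference is that you invoke Proposition~\ref{altform} with $\Gamma^2\equiv 1$ where the paper writes the same identity directly from \eqref{eq:Ji0}.
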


\begin{proof}
Fix $(p_1,x)\in\S$ and assume $\gamma^*=0$. Then for any $\tau\in\T$ we have
\begin{eqnarray}\label{eq:S0}
\J_1(\tau,0;p_1,x)
&=&(1-p_1)\E_x\left[e^{-r\tau}g(X_\tau)1_{\{\tau<\infty\}}\right]+\frac{p_1}{2}g(x)1_{\{\tau=0\}}\\
&=&\notag(1-p_1)1_{\{\tau>0\}}\E_x\left[e^{-r\tau}g(X_\tau)1_{\{\tau<\infty\}}\right]\\
&& +\left(1-\frac{p_1}{2}\right)g(x)1_{\{\tau=0\}}\notag\\
&\le& (1-p_1)1_{\{\tau>0\}}V(x)+\left(1-\frac{p_1}{2}\right)g(x)1_{\{\tau=0\}}\notag\\
&\le&\left(1-\frac{p_1}{2}\right)g(x),\notag
\end{eqnarray}
where the final inequality follows from $(1-p_1)V(x)\le (1-p_1/2)g(x)$. 

Thanks to \eqref{eq:S0} and \eqref{eq:ran1} we have
\[
\sup_{\tau\in\T^R_1}\J_1(\tau,0;p_1,x)\le \left(1-\frac{p_1}{2}\right)g(x).
\]
It is now straightforward to verify that equality holds if we choose $\tau=0$, $\P_x$-a.s., so $\tau^*=0$ is an optimal response to $\gamma^*=0$.

A similar argument allows to prove that $\gamma^*=0$ is also an optimal response to $\tau^*=0$. 
Indeed, as in \eqref{eq:S0} we have for $\gamma\in\T$ that
\begin{eqnarray*}\label{eq:S1}
\J_2(0, \gamma;p_2,x)
&=&\notag(1-p_2)1_{\{\gamma>0\}}\E_x\left[e^{-r\gamma}g(X_\gamma)1_{\{\gamma<\infty\}}\right]\\
&& +\left(1-\frac{p_2}{2}\right)g(x)1_{\{\gamma=0\}}\notag\\
&\le& (1-p_2)1_{\{\gamma>0\}}V(x)+\left(1-\frac{p_2}{2}\right)g(x)1_{\{\gamma=0\}}\notag\\
&\le&\left(1-\frac{p_2}{2}\right)g(x)\notag
\end{eqnarray*}
since $(1-p_1)V(x)\le (1-p_1/2)g(x)$ and $p_2\geq p_1$ imply that $(1-p_2)V(x)\le (1-p_2/2)g(x)$.
Moreover, equality holds for $\gamma=0$, and thus
$(\tau^*,\gamma^*)=(0,0)$ is a Nash equilibrium as claimed.
\end{proof}

\begin{remark}
Notice that the equilibrium value for Player~2 is not continuous in $p_1$ at $c(x)$. In fact, for 
$p_1<c(x)<1$, the equilibrium value for Player~2 is $(1-p_1)V(x)$ (see Theorem~\ref{thm:NE}), whereas 
the equilibrium value for $p_1=c(x)$ in Theorem~\ref{thm:NE3} is $(1-p_2/2)g(x)\leq (1-p_1/2)g(x)=(1-p_1)V(x)$, where 
the inequality is strict if $p_2>p_1$.
\end{remark}


\subsection{Comments on our results and heuristics}\label{sec:comment}
\subsubsection*{Benefits of wariness} 
In the case of known competition, i.e.~$p_1=p_2=1$, both players stopping immediately provides a 
Nash equilibrium, and in the case of no competition, one should use the single-player stopping strategy $\tau^*_V$.
In view of this, it is intuitively clear that Player~2, who is more `wary' (or aware) of competition than Player~1,
should be the player who is more eager to stop early. This intuition is confirmed by Theorem~\ref{thm:NE}; 
in fact, the more active role of Player 2 is quantified by the fact that she stops with 
`generalised intensity' which is $(p_2/p_1)$-times bigger than that of Player 1. 

Player 2's equilibrium strategy is surprisingly rewarding because it yields an average payoff $(1-p_1)V(x)$, which is strictly larger than her `safety' level $(1-p_2)V(x)$, if $p_1<p_2$ (see \eqref{U}). In contrast, the equilibrium strategy of Player 1 yields just the `safety' level $(1-p_1)V(x)$. 
Interestingly, this implies that Player~1 is not worse off by publicly revealing her presence in the game. 
In fact, consider two games with parameters $(p_1,p_2)$ and $(p_1,1)$, respectively (and assume that $p_1<c(x)$ so that we are in the setting of Theorem~\ref{thm:NE}).
Then both Players' equilibrium payoffs would, in both cases, be $(1-p_1)V(x)$.

\subsubsection*{Jumps in equilibrium strategies} A truly remarkable feature from the mathematical point of view is the size of the initial jump in the equilibrium process $\Gamma^{2,*}$ for an initial belief $p_1\in (b(x),c(x))$. Indeed, we have seen in Theorem~\ref{thm:NE} that such jump pushes $(\Pi^{1,*},X)$ strictly in the interior of the set $\OC$ (see Lemma \ref{lem:5.1}). This is somewhat unexpected if we consider that our set-up shares similarities with the framework of nonzero-sum games of singular control with proportional cost of control (see, e.g., \cite{DeAFe18}). In those problems the initial jump of the optimal control drives the optimally controlled process to the boundary of the action set. Hence, in our setting we might have expected a jump to the boundary of the set $\OC$. 

We note that the observed structure of our optimal controls at equilibrium might stem from the fact that players split the payoff if stopping simultaneously. 

\subsubsection*{Heuristics Theorem \ref{thm:NE}}
For simplicity (and following the actual route that led us to our conclusions) we consider the case of players with {\em symmetric} initial beliefs, i.e.~$p_1=p_2=:p$. In this case the players should use the same strategy 
$\Gamma^{1,*}=\Gamma^{2,*}=\Gamma^{*}$.
Moreover, if Player~2 uses a randomised time $\gamma^*$ generated by $\Gamma^*$, then, for optimality, Player~1 should be indifferent between strategies 
$\tau^*_u=\inf\{t\ge 0:\Gamma^*_t>u\}$ as in \eqref{eq:tau-u} with $u\in[0,1)$ (see Proposition \ref{altform}). In particular, one would expect that $\J_1(\tau_u^*,\gamma^*;p,x)$ is independent of $u\in[0,1)$, so the equilibrium value should be given by
\[\J_1(\tau_u^*,\gamma^*;p,x)= \lim_{u\uparrow 1}\J_1(\tau_u^*,\gamma^*;p,x) =\J_1(\tau_V^*,\gamma^*;p,x)= (1-p)V(x)\]
(where the second and third equalities are to be understood intuitively). 
With this candidate equilibrium value in mind, the construction in Theorem~\ref{thm:NE} follows naturally.

We next comment on the specification of $\Gamma^*_0$ in the case $b(x)< p <c(x)$.
Again we consider players with symmetric initial beliefs, i.e.~$p_1=p_2=:p$ so that, at equilibrium, they should use the same strategy. 

Assume Player 2 chooses to stop immediately with probability $\Gamma^2_0$. For this to be an equilibrium strategy,  Player 1 must be indifferent between stopping and continuing (so that any linear combination of such strategies gives a best reply). The indifference condition reads 
\begin{align}\label{indiff}
&(1-p)g(x)+p(1-\Gamma^2_0)g(x)+\frac{p}{2}\Gamma^2_0g(x)=(1-p+p(1-\Gamma^2_0))(1-q)V(x)
\end{align}
where, on the left-hand side of the equation we have Player 1's payoff in case of immediate stopping and, on the right-hand side, her payoff if continuing. Notice in particular that 
\[
q=\frac{p(1-\Gamma^2_0)}{1-p\Gamma^2_0}
\]
(defined as in \eqref{eq:Gq}) is the adjusted belief of Player 1 if Player 2 does not end the game immediately (see \eqref{Pi1}). On the right-hand side of \eqref{indiff} we have $(1-q)V(x)$ because we expect that, if the game does not end immediately, the belief should be updated to $(q,x)\in\OC$.

Solving the indifference equation for $\Gamma^2_0$ gives us the first equation in \eqref{eq:Gq}.

\subsubsection*{Evolution of adjusted beliefs}
Defining $\Pi^{2,*}_t:= \frac{p_2(1-\Gamma^{1,*}_t)}{1-p_2\Gamma^{1,*}_t}$, it is straightforward to check that 
\[\Pi^{2,*}_t= \frac{(1-p_2)\Pi^{1,*}_t + p_2-p_1}{1-p_1}.\]
Consequently, 
\[0\leq \Pi^{2,*}_t-\Pi_t^{1,*}=\frac{p_2-p_1}{1-p_1}(1-\Pi_t^{1,*}).\]
Thus, in equilibrium, both adjusted beliefs are nonincreasing, but the difference between them is (rather surprisingly) nondecreasing.

\subsubsection*{Connection with barrier strategies}
As is shown in Theorem~\ref{thm:NE}, if $(p_1,x)\in\overline\C$, the process $X$ is stopped (in equilibrium) only when $t\mapsto b(X_t)$ reaches a new minimum. That is, $X$ is stopped (with some generalised intensity) upon reaching for the first time a new region in $\R^d$. This is a remarkable feature which shows that if the game 
was specified so that the set of strategies consisted only of entry times to (randomised) sets then the same equilibrium would be obtained. The conclusion is more easily visualised in a one dimensional state space (as in the example in the next section). If $X\in\R$ we would have Nash equilibria produced by threshold strategies (possibly two-sided), with random thresholds.
Note, however, that our methodology shows that $(\tau^*,\gamma^*)$ in fact is a Nash equilibrium in the much larger class of strategies that consists of all randomised times. This fact is far from being trivial because, if we think of equilibria in the game as a fixed point for each player's best response to the opponent's strategy, there is no reason to expect that equilibrium threshold strategies should emerge from the much broader class that we consider.

\section{Real options with unknown competition}\label{sec:realoption}

In this section we consider an example of a real option with uncertain competition (for the classical case of real options 
with no competition, see e.g. \cite{DP}, and for a related problem of real option pricing under incomplete information about the competition, see \cite{LP}).
For this, let $g(x)=(x-K)^+$, $p_1=p_2=:p\in(0,1)$ and
\[\ud X_t=\mu X_t\,\ud t + \sigma X_t\,\ud W_t\]
for some constants $K>0$, $\mu<r$ and $\sigma>0$.
Here $X$ represents the present value of future revenues from 
entering a certain business opportunity, $K$ is the sunk cost for entering, and $r$ is a discount rate.
The business opportunity, however, is subject to competition,
and we assume that each active agent estimates the probability of an active competitor 
to be $p$ (hence the probability of no competition is $1-p$). This symmetric setting 
is reasonable in applications where players are `similar' (e.g., firms of a similar size producing the same good).

It is well-known (e.g. \cite{DP}) that the value 
\[V(x):=\sup_{\tau}\E_x[e^{-r\tau}g(X_\tau)1_{\{\tau<\infty\}}]\]
in the corresponding one-player game is given by 
\[
V(x)=
\left\{
\begin{array}{ll}
(B-K)(x/B)^\eta, & \text{for $x\in(0,B)$,}\\
g(x), & \text{for $x\in[B,\infty)$},
\end{array}\right.\]
where 
\[\eta=\frac{\sigma^2-2\mu}{2\sigma^2} + \sqrt{\left(\frac{\sigma^2-2\mu}{2\sigma^2}\right)^2+\frac{2r}{\sigma^2}}\in(1,\infty)\]
and $B:=\eta K/(\eta-1)$.
Since $V$ and $B$ are explicit, recalling \eqref{eq:b(x)} and \eqref{eq:c(x)} one can obtain explicit formulae for the boundaries $x\mapsto b(x)$ and $x\mapsto c(x)$ of the sets $\overline \C$ and $\C^\prime$ in \eqref{eq:C2} and \eqref{eq:C3}.
In fact, 
\[b(x)=1-\frac{g(x)}{V(x)}=\left\{\begin{array}{cl}
1 & x\in(0,K]\\
1-\frac{(x-K)(B/x)^\eta}{B-K} & x\in(K,B)\\
0 & x\in[B,\infty)
\end{array}\right.\]
and
\[c(x)=\frac{V(x)-g(x)}{V(x)-g(x)/2}=\left\{\begin{array}{cl}
1 & x\in(0,K]\\
1-\frac{(x-K)}{2(B-K)(x/B)^\eta-x+K} & x\in(K,B)\\
0 & x\in [B,\infty).\end{array}\right.\]
For a graphical illustration of these functions, see Figure~\ref{fig:b}. 

In line with Lemma~\ref{cont} we note that
\[
\lim_{x\to B}b(x)=\lim_{x\to B}c(x)=0 \quad\text{and}\quad\lim_{x\to K}b(x)=\lim_{x\to K}c(x)=1.
\]
Moreover, it is straightforward to check that $b$ and $c$ are non-increasing, and they are strictly decreasing on $(K,B)$.
Hence, for any $(p,x)\in\overline\C$ we also have
\begin{align}\label{eq:exZ}
Z^{p,x}_t=\inf_{0\le s \le t}b(X^x_s)=b\left(\sup_{0\le s\le t}X^x_s\right),\qquad t\ge0,
\end{align} 
by \eqref{eq:SIZ}. Thanks to these formulae we can now write the Nash equilibrium in a very explicit form. We give some details when the game starts in $\overline\C$ and the other cases can be handled analogously.

\begin{figure}[htp!]\centering
  \includegraphics[width=0.45\textwidth]{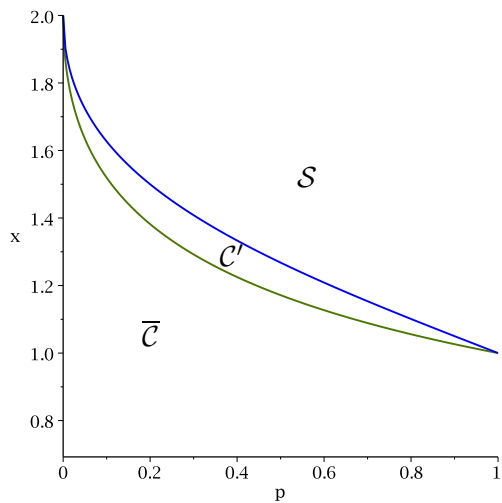}
\put(-109, 91){$\mathcal C'$}
\put(-80, 121){$\mathcal S$}
\put(-138, 61){$\OC$}
  \caption{The figure displays the curves $p=b(x)$ (lower one) and $p=c(x)$ (top one). The parameter values are $K=1$ and $\eta=2$ so that $B=2$.} \label{fig:b}
\end{figure}

Given an initial point $(p,x)\in \overline\C$ (i.e. $p\leq b(x)$), Theorem~\ref{thm:NE} applies with $\Gamma^*_0=0$. 
Moreover, we notice that $\Gamma^{1,*}=\Gamma^{2,*}=:\Gamma^*$, due to symmetric beliefs of the players. Thanks to \eqref{eq:exZ} and \eqref{eq:Gpx}, and recalling that $\tau^*$ and $\gamma^*$ are generated by $\Gamma^*$, we have
\begin{align*}
\tau^*=&\inf\{t\ge 0: \Gamma^*_t>U_1\}\\
=&\inf\{t\ge 0:Z_t< p(1-U_1)/(1-pU_1)\}\\
=&\inf\{t\ge 0: X_t\ge B^*_1\},
\end{align*}
and $\gamma^*=\inf\{t\ge 0: X_t\ge B^*_2\}$, with the random variables 
\begin{equation}
\label{B*}
B^*_i:=b^{-1}\left(\frac{p(1-U_i)}{1-p U_i}\right), \quad i=1,2,
\end{equation}
where $b^{-1}$ is the inverse function of $b$.

\end{document}